\newtheorem{prop}{Proposition}[section]
\newtheorem{theo}[prop]{Theorem}
\newtheorem{lemm}[prop]{Lemma}
\newtheorem{coro}[prop]{Corollary}
\theoremstyle{definition}
\newtheorem{defi}[prop]{Definition}
\newcommand{\RR}{\mathbb{R}}
\newcommand{\sL}{\mathscr{L}}
\DeclareMathOperator{\tr}{tr}
\newcommand{\bangle}[1]{\left\langle #1 \right\rangle}
\DeclareMathOperator{\Ric}{Ric}
\DeclareMathOperator{\Div}{div}
\title{Expanding Ricci Solitons Asymptotic to Cones}
\author{Otis Chodosh}
\date{\today}
\thanks{ I would like to thank my advisor, Simon Brendle, for suggesting this problem as well as for his invaluable guidance. I am also grateful to Richard Bamler and Leon Simon for discussions about the Ricci flow and asymptotic limit theorems and for their continued encouragement. This work was supported in part by a National Science Foundation Graduate Research Fellowship DGE-1147470. }
\address{Department of Mathematics, Stanford University, Stanford, CA 94305}
\email{ochodosh@math.stanford.edu}
\begin{document}

\begin{abstract} 
We show that an expanding gradient Ricci soliton which is asymptotic to a cone at infinity in a certain sense must be rotationally symmetric. 
\end{abstract}

\maketitle

\section{Introduction}
An expanding gradient soliton is a self-similar solution to the Ricci flow that flows by diffeomorphism and expanding homothety. Recall that expanding gradient solitons satisfy $2 \Ric_{g} + g = \sL_{\nabla f} (g)$ for some function $f$. To relate these two notions, one may let $X : = \nabla f$ and denote by $\Phi_{\tau}$ the gradient flow of $-X$ for time $\tau$. It is not hard to see that the metric $\hat g(t) = t \Phi^{*}_{\log t} (g)$ is a (self-similar) solution to the Ricci flow. An overview of Ricci solitons, along with many existence and uniqueness results may be found in \cite{Cao:RecentSoliton}. 

Expanding Ricci solitons are of interest for a variety of reasons. In addition to providing a natural generalization of Einstein metrics, they model Type-III singularities in the Ricci flow \cite{Hamitlon:Singularities95} (see also \cite{Cao:KRFsolitons,ChenZhu:pinchedCurvature}) and provide examples of equality in Hamilton's Harnack inequality \cite{Hamilton:Harnack}. Finally, we remark that in \cite{SchulzeSimon:Expand-Cones}, Schulze--Simon have recently constructed solutions to Ricci flow coming out of the asymptotic cone at infinity of manifolds with positive curvature operator and shown that such a solution to Ricci flow must be an expanding gradient soliton.  

The simplest example of a non-Einstein expanding gradient soliton is the Gaussian soliton $(\RR^{n}, \delta_{\text{euc}}, |x|^{2}/4)$. In addition, Bryant has constructed non-flat expanding gradient solitons which are rotationally symmetric and are asymptotic to a cone at infinity \cite{Bryant:Solitons}. In particular, he has constructed a one-parameter family of solitons with positive sectional curvature. We recall his construction in the appendix to this work. 

In dimensions $n\geq 4$, there are additional known examples of expanding solitons. For example, Cao has constructed $U(n)$-invariant expanding gradient K\"ahler-Ricci solitons in \cite{Cao:KRFsolitons}. These were generalized to a construction of expanding gradient K\"ahler-Ricci solitons on complex line bundles by Feldman--Ilmanen--Knopf in \cite{FeldmanIlmanenKnopf}. Additionally, Gastel--Kronz have constructed doubly warped (non-K\"ahler) expanding gradient solitons in \cite{GastelKronz}. 

Various authors have obtained uniqueness results concerning expanding gradient solitons. In \cite{ChenZhu:pinchedCurvature}, Chen--Zhu show that a non-compact expanding gradient soliton with positive sectional curvature and uniformly pinched Ricci curvature must be the flat expanding Gaussian soliton. Another uniqueness result for the Gaussian soliton can be found in the work of Pigola--Rimoldi--Setti \cite{PigolaRimoldiSetti:remarksSol} in which they show that under certain integrability assumptions of the soliton vector field, the soliton must be flat. Additionally, Cao--Catino--Chen--Mantegazza--Mazzeiri have shown that an expanding gradient soliton with positive Ricci curvature must be rotationally symmetric under certain assumptions on the Bach tensor (that it is divergence free when $n=3$ and that it vanishes identically when $n\geq 4$) \cite{CaoCatinoChenMantegazzaMazzieri:BachFlat}. We also remark that Chen and Chen--Deruelle have studied the asymptotic geometry of expanding solitons with finite asymptotic curvature ratio, showing that they have cone structure at infinity \cite{Chen:AsymptoticExpanding,ChenDerulle:StructInftyExpandSol}.

In order to state our results, we let $g_{\alpha}$ denote the conical metric with cone angle $\alpha \in [0,1)$ on $\RR^{n}\backslash\{0\}$ given in polar coordinates by $g_{\alpha}:= dr^{2}+ (1-\alpha)r^{2} g_{S^{n-1}}$. We thus define
\begin{defi}\label{defi:asymp-cone-sol}
We say that an expanding gradient soliton $(M,g,f)$ is \emph{asymptotically conical as a soliton} if there is a map $F:  (r_{0},\infty)_{r} \times (S^{n-1})_{\omega}\to M$ so that
\begin{enumerate}
	\item $F$ is a diffeomorphism onto its image and $M\backslash F^{-1}((r_{0},\infty) \times S^{n-1})$ is a compact set. 
	\item It parametrizes the level sets of $f$, in the sense that $f(F(r,\omega)) = r^{2}/4$ and 
	\begin{equation*}
		\frac{\partial F}{\partial r} = \sqrt{f} \frac{X}{|X|^{2}}.
	\end{equation*}
	\item In these coordinates, $g$ is $C^{2}$-asymptotic to a conical metric, in the sense that $F^{*}(g) = g_{\alpha} + k$ for some $\alpha \in [0,1)$ and $k$ some $(0,2)$-tensor so that $|\nabla^{j}k|= O(r^{-3\epsilon-j})$ for some $\epsilon>0$ and $j=0,1,2$.
\end{enumerate}
\end{defi}

In particular, Bryant's expanding 1-parameter family of positively curved solitons satisfy these assumptions by Proposition \ref{prop:app-bry-are-asymp-cone}. The goal of this paper is to prove:
\begin{theo}\label{theo:main-unique-theo}
Suppose that $(M^{n},g,f)$ is an expanding gradient soliton (for $n\geq 3$) that has positive sectional curvature and is asymptotically conical as a soliton, as defined above. Then, $(M,g,f)$ is rotationally symmetric. 
\end{theo}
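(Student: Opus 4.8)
The strategy is to adapt Brendle's method for establishing rotational symmetry of steady solitons to the expanding, asymptotically conical setting. Rotational symmetry is equivalent to the existence of an isometric $SO(n)$-action on $(M,g)$ whose principal orbits are the level sets of $f$, and this in turn follows once we produce $\binom{n}{2}=\tfrac{n(n-1)}{2}$ linearly independent Killing vector fields that generate the rotations of the asymptotic cone near infinity. The analysis is governed throughout by the drift Laplacian $\Delta_f := \Delta - \nabla_{\nabla f}$, the natural self-adjoint operator for the weighted measure $e^{-f}\,dV_g$; since $f \sim r^2/4$ grows quadratically by Definition \ref{defi:asymp-cone-sol}, this weight decays rapidly on the end and gives $\Delta_f$ good Fredholm and spectral behavior in the corresponding weighted spaces.

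First I would construct approximate symmetries. The cone $(\RR^n\setminus\{0\},g_\alpha)$ carries the rotational Killing fields generated by $\mathfrak{so}(n)$ acting on the $S^{n-1}$ factor; pulling these back through the map $F$ of Definition \ref{defi:asymp-cone-sol} and extending arbitrarily into the compact core yields vector fields $\bar U^{(a)}$, $a=1,\dots,\tfrac{n(n-1)}{2}$. Because $F^*(g)=g_\alpha+k$ with $|\nabla^j k| = O(r^{-3\epsilon-j})$, the Killing defect $\sL_{\bar U^{(a)}} g$ decays at the rate inherited from $k$. The central analytic step is then to correct each $\bar U^{(a)}$ to an exact solution of the linear elliptic equation $\Delta_f U + 2\,\Ric(U) + \tfrac12 U = 0$, which one checks is satisfied by every Killing field that preserves $f$ (using $\nabla^2 f = \Ric + \tfrac12 g$ and $[U,\nabla f]=0$). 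I would solve this equation with the asymptotics $U^{(a)} - \bar U^{(a)} \to 0$ by inverting $\Delta_f$ against the decaying error in weighted spaces built from $e^{-f}\,dV_g$, using a separation-of-variables analysis on the conical end to pin down the indicial roots and the admissible decay rate.

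It then remains to upgrade each solution to a genuine Killing field. Setting $h^{(a)} := \sL_{U^{(a)}} g$ and combining the soliton identities with the equation for $U^{(a)}$, one computes that $h^{(a)}$ satisfies a Lichnerowicz-type equation of the schematic form $\Delta_f h + 2\,R_{ikjl}h^{kl} + (\text{lower order}) = 0$, together with $h^{(a)} \to 0$ at infinity by construction. This is where the curvature hypothesis enters decisively: positivity of the sectional curvature controls the sign of the zeroth-order term $R_{ikjl}h^{kl}h^{ij}$, so that a maximum-principle (or weighted-energy) argument applied to the tensor $h^{(a)}$, which decays at infinity and solves a source-free elliptic equation, forces $h^{(a)}\equiv 0$. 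I expect this vanishing step, along with the solvability in weighted spaces, to be the main obstacle: it is precisely the control of the sign of the curvature term in the tensor maximum principle that requires positive sectional curvature rather than merely nonnegative Ricci or scalar curvature.

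Finally, the $U^{(a)}$ are linearly independent near infinity, hence on all of $M$, and their Lie brackets are again $f$-preserving Killing fields solving the same linear equation with the asymptotics dictated by the corresponding brackets in $\mathfrak{so}(n)$; by the uniqueness built into the construction they must reproduce the $\mathfrak{so}(n)$ commutation relations. Integrating this Lie algebra of Killing fields yields an isometric $SO(n)$-action on $(M,g)$ whose orbits are the level sets of $f$, so that $(M,g,f)$ is rotationally symmetric, as desired.
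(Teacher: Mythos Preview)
Your overall architecture is exactly that of the paper (and of Brendle's steady case): transplant the $\mathfrak{so}(n)$ Killing fields from the cone, correct each to solve a linear elliptic equation on vector fields, deduce a source-free Lichnerowicz-type equation for $h = \sL_W g$, and use positive sectional curvature to force $h\equiv 0$. The gap is in the choice of the vector-field operator. The identity one actually has (Section~\ref{sect:derEqns}) is, for \emph{any} $W$ with $h=\sL_W g$,
\[
\Delta_L h + \sL_X h - h \;=\; \sL_{\,\Delta W + D_X W - \tfrac{1}{2}W\,}(g),
\]
so the equation one must impose is $\Delta W + D_X W - \tfrac{1}{2}W = 0$, with drift $+D_X$ and zeroth-order term $-\tfrac12$. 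Your operator $\Delta_f + 2\Ric + \tfrac12 = \Delta - D_X + 2\Ric + \tfrac12$ is also annihilated by $f$-preserving Killing fields, but imposing it does \emph{not} kill the right-hand side above, so you do not obtain a clean equation for $h$. Relatedly, the weight $e^{-f}\,dV_g$ makes $\Delta - D_X$ self-adjoint, not $\Delta + D_X$; the paper does not use weighted Fredholm theory at all, but solves $\Delta V + D_X V - \tfrac12 V = Q$ by exhibiting the barrier $(f+\tfrac{n}{2})^{-\epsilon}$ (it is precisely the $-\tfrac12$ sign that makes the maximum principle run), passing to the limit of Dirichlet problems on $\{f\le\rho_m^2\}$, and bounding $|DV|$ via parabolic interior estimates under the self-similar flow $\hat g(t)=t\,\Phi_{\log t}^* g$.

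Two further points where the paper's argument differs from your sketch. The vanishing $h\equiv 0$ is not a maximum principle on $|h|$; it uses that $2\Ric+g=\sL_X g$ solves the \emph{same} equation $\Delta_L(\cdot)+\sL_X(\cdot)-(\cdot)=0$ and dominates $g$, so the least $\theta\ge 0$ with $\theta(2\Ric+g)\ge h$ is forced to zero by positive sectional curvature together with Hopf's strong minimum principle applied to $\tr\bigl(\theta(2\Ric+g)-h\bigr)$. And the endgame is different: rather than integrating the Lie algebra to an $SO(n)$-action, the paper shows the $W_a$ span the tangent space of each far level set of $f$, deduces vanishing of the Cotton ($n=3$) or Weyl ($n\ge 4$) tensor there, invokes Bando's real-analyticity of Ricci-flow metrics to propagate this to all of $M$, and then appeals to the classification of locally conformally flat expanding gradient solitons with positive Ricci curvature.
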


We point out that for any soliton $(M,g,f)$ with non-negative Ricci curvature, $f$ behaves asymptotically like $d(p,\cdot)^{2}/4$ (where $p$ is the point where $f$ attains its minimum value) so the second assumption is quite general. On the other hand, the first assumption, which determines the topology of $M$ near infinity, is more restrictive yet still natural. In particular, we note that if $(M,g)$ has positive curvature operator, then its asymptotic cone at infinity (in the Gromov--Hausdorff sense) is a cone over a manifold homeomorphic to $S^{n-1}$. This follows from works by Kapovitch and Perelman, as explained in \cite[Appendix 2]{SchulzeSimon:Expand-Cones}.

Our proof of Theorem \ref{theo:main-unique-theo} is based on the recent works of Brendle \cite{Brendle:3DSolitonUniqueness,Brendle:HighDimSoliton} in which it is shown that a steady Ricci soliton with positive sectional curvature that parabolically blows down to a shrinking cylinder must be rotationally symmetric. In particular, assuming the soliton is $\kappa$-noncollapsed, these assumptions are always satisfied in three dimensions, answering a question raised in Perelman's first paper \cite{Perelman:Entropy}:

\begin{theo}[S. Brendle {\cite{Brendle:3DSolitonUniqueness}}]
In three dimensions, a $\kappa$-noncollapsed complete non-flat steady gradient soliton must be the rotationally symmetric Bryant soliton. 
\end{theo}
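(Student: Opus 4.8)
The plan is to produce three Killing vector fields generating an isometric $SO(3)$-action on $(M,g)$, since in this setting rotational symmetry is equivalent to the existence of such an action whose generic orbits are the level sets of $f$. The argument proceeds in three stages: first pin down the asymptotic geometry at infinity, then build approximate rotational symmetries there, and finally upgrade them to exact Killing fields by an elliptic PDE argument driven by the positivity of the curvature.

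First I would analyze the geometry at infinity. Using Hamilton--Ivey pinching in dimension three together with the $\kappa$-noncollapsing hypothesis, one shows the soliton is non-negatively (in fact positively) curved and, viewed as an ancient flow, is a $\kappa$-solution. Perelman's dimension-reduction theory for $\kappa$-solutions then forces the soliton to be asymptotic, after rescaling, to a round shrinking cylinder $S^{2}\times\RR$. The soliton identity $\Ric=\nabla^{2}f$ and the resulting exact relation between $R$ and $|\nabla f|^{2}$ give quantitative control: the level sets $\Sigma_{r}=\{f=r\}$ are approximately round two-spheres and all relevant geometric quantities converge at a definite rate as $r\to\infty$. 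This decay is precisely the input that later makes the maximum-principle argument converge.

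Next I would construct approximate Killing fields. On the asymptotic cylinder the round $S^{2}$ factor carries three rotational Killing fields spanning $\mathfrak{so}(3)$; pulling these back through the asymptotic identification yields vector fields $V^{(a)}$, $a=1,2,3$, on the end of $M$ with $\sL_{V^{(a)}}g$ small and decaying. To turn these into exact symmetries I would solve, on an exhaustion $\Omega_{r}=\{f<r\}$, the natural linear boundary-value problem for a vector field $W$: impose $W=V^{(a)}$ on $\partial\Omega_{r}$ and require that $W$ satisfy the drift Killing-operator equation $\Delta_{f}W+\Ric(W)=0$, where $\Delta_{f}=\Delta-\nabla f\cdot\nabla$ is the weighted Laplacian adapted to the soliton and self-adjoint against $e^{-f}$. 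Letting $r\to\infty$ and using the decay of the boundary data should produce global vector fields.

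The crux, and the main obstacle, is to show that the Lie derivative $h=\sL_{W}g$ vanishes identically, so that $W$ is genuinely Killing. The point is that $h$ satisfies a Lichnerowicz/Bochner-type drift equation of the schematic form $\Delta_{f}h=-2\,\mathrm{Rm}(h)+(\text{lower order})$, in which the curvature reaction term has a favorable sign because the sectional curvature is positive. Combining this with the decay of $h$ at infinity, a carefully calibrated maximum principle applied to a scalar quantity built from $h$ (weighted to absorb the drift term) should force $h\equiv0$; this is where the precise asymptotics of the first stage and the positivity of the curvature are both indispensable, and where the delicate a priori estimates reside. Once the three $V^{(a)}$ are exact Killing fields, one checks they are linearly independent near infinity and that their brackets reproduce $\mathfrak{so}(3)$, so they integrate to an isometric $SO(3)$-action whose orbits are the level sets of $f$, yielding the rotational symmetry of the Bryant soliton.
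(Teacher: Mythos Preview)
The paper does not prove this theorem. It is stated in the introduction solely as a citation of Brendle's result \cite{Brendle:3DSolitonUniqueness}, as motivation for the paper's own work on \emph{expanding} solitons. There is no ``paper's own proof'' to compare against.

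That said, your outline has a genuine structural gap that is worth naming. You correctly identify the overall architecture of Brendle's argument---asymptotic analysis, approximate Killing fields from the model at infinity, and a PDE/maximum-principle step to make them exact---and this is indeed the template the present paper adapts to the expanding case. But you then propose to carry out the upgrading step via an \emph{elliptic} boundary-value problem on the sublevel sets $\{f<r\}$, together with a drift maximum principle for $h=\sL_{W}g$. That is precisely the method of this paper for expanders, and the paper itself explains why it does \emph{not} transfer to the steady case: the cylinder at infinity appears only after \emph{parabolic} rescaling, not as a fixed asymptotic metric, so there is no single diffeomorphism identifying the end with a cylinder along which one can pull back Killing fields and obtain decaying error terms. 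In the steady setting $|\nabla f|$ and $R$ are bounded rather than growing, so the operator $\Delta+D_X-\tfrac12$ lacks the zeroth-order coercivity that makes the barrier argument in Section~\ref{sect:maxAKVF} work, and $\sL_{U}g$ does not decay in any $r$-weighted sense on the fixed manifold.

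Brendle's actual proof replaces this with a genuinely parabolic argument: the approximate Killing fields are constructed scale-by-scale via the cylindrical blowdowns, and the vanishing of $\sL_{W}g$ is obtained from a parabolic Lichnerowicz equation on the associated Ricci flow, using the self-similar structure to propagate smallness backwards in time. Your sketch elides exactly this difficulty, which is the technical heart of \cite{Brendle:3DSolitonUniqueness} and the reason the expanding case treated in the present paper requires a separate (and in some ways simpler) argument.
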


 On the other hand, there are several crucial differences between the arguments used in \cite{Brendle:3DSolitonUniqueness,Brendle:HighDimSoliton} to handle the steady case and those of the current paper. In particular, as it does not seem possible to perform a parabolic blowdown of the expanding solitons under consideration, in most parts of the paper all that we have at our disposal is the elliptic maximum principle. However, we are fortunate to have more effective barriers in this case, and these turn out to be sufficient to replace the blowdown arguments used by Brendle to handle the steady case.

We also remark that Kotschwar--Wang have recently demonstrated a rigidity property of shrinking Ricci solitons asymptotic to cones \cite{KotschwarWang:shrinkers}. They make use of methods that are quite different from those used in this work, relying on techniques used to prove backwards uniqueness for heat-type equations. Interestingly, they do not require any curvature assumptions (which would be somewhat unnatural in the setting of asymptotically conical shrinkers: a shrinker with nonnegative Ricci curvature cannot be asymptotically conical unless it is flat) and apply to arbitrary cones. 

However, it seems that the asymptotically conical and shrinker assumptions are crucial for their techniques to apply, and as such it seems unlikely that they would be applicable to the case of expanding solitons considered in this work. For example, there are incomplete expanding solitons that are asymptotically conical as solitons,\footnote{This follows from a straightforward modification of the proof in the appendix that the expanding Bryant soliton exists and has the desired properties.} in the sense of our Definition \ref{defi:asymp-cone-sol}. In contrast, it is not possible that an incomplete shrinker is asymptotic to the same cone as a complete one, by Kotschwar--Wang's result.  

We now describe the structure of the paper. In Section \ref{sect:asympt-geo}, we collect several results about the behavior of the soliton in the asymptotically conical region. Then, in Section \ref{sect:derEqns}, we show that if a vector field satisfies $\Delta W + D_{X} W - \frac 12 W = 0$, then $h := \sL_{W}(g)$ satisfies $\Delta_{L} h + \sL_{X}h - h =0$. A crucial observation is that both of these equations have lowest order term of the correct sign in order to apply the maximum principle. In particular, we will later show that certain solutions $W$ to the first PDE are Killing vector fields on the expanding soliton, by showing that $\sL_{W}(g)$ vanishes identically, thanks to the maximum principle applied to the second PDE. 

In Section \ref{sect:maxAKVF}, we observe that the function $\left(f +\frac n 2\right)^{-\epsilon}$ acts as a barrier for the PDE on vector fields described above. We then use this to construct a vector field $V$ solving $\Delta V + D_{X} V - \frac 12 V = Q$ which has $|V|,|DV| = O(r^{-2\epsilon})$. Here, $Q$ is a given vector field with $|Q| = O(r^{-2\epsilon})$. Then, in Section \ref{sect:lichPDE}, a barrier argument using $2\Ric + g$ is used to show that any solution to $\Delta_{L} h + \sL_{X}(h) - h =0$ with $|h| = o(1)$ must vanish identically. 

Finally, the proof of Theorem \ref{theo:main-unique-theo} is given in Section \ref{sect:proofMainTheo}. The main idea of the proof is to consider approximate Killing vector fields at infinity coming from symmetries of the exact cone and perturb them so as to be actual Killing vector fields. More precisely, if a vector field $U$ satisfies $|\Delta U + D_{X}U -\frac 12 U| \leq O(r^{-2\epsilon})$ as well as $|\sL_{U}(g)|\leq O(r^{-2\epsilon})$, then using the results in Section \ref{sect:maxAKVF}, we may find a vector field $V$ so that $W:= U-V$ satisfies $\Delta W + D_{X} W - \frac 12 W = 0$ and $h:=\sL_{W}(g)$ decays as $|h|\leq O(r^{-2\epsilon})$. By the results in Section \ref{sect:derEqns}, we then see that $h$ satisfies $\Delta_{L} h + \sL_{X}(h) -\frac 12 h = 0$. Finally, the results in Section \ref{sect:lichPDE} show that $h$ must vanish identically, so $W$ is a Killing vector field. It is not hard to show that this allows us to upgrade the approximate Killing vectors to exact Killing vectors, showing rotational symmetry. 

We also include in the appendix a discussion of the rotationally symmetric expanding gradient solitons constructed by Bryant in \cite{Bryant:Solitons}. In particular, we prove that the expanding Bryant solitons are asymptotically conical as solitons, in the sense of Definition \ref{defi:asymp-cone-sol}, by showing that the non-convergent formal power series for the soliton warping function obtained in \cite{Bryant:Solitons} gives the correct asymptotics for the warping function, along with its first two derivatives. 
 
\section{Asymptotic geometry} \label{sect:asympt-geo}

By an observation of Hamilton \cite{Hamitlon:Singularities95}, $|\nabla f|^{2} + R- f $ is constant. We will assume throughout the paper that 
\begin{equation}
|\nabla f|^{2}+R= f. 
\end{equation}
Combined with the trace of the soliton equation this gives 
\begin{equation}
\Delta f + |\nabla f|^{2} = \frac {n}{2} + f.
\end{equation}
Observe that $\Ric_{g_{\alpha}} =  (n-2)\alpha g_{S^{n-1}}$. From the formula for $D\Ric|_{g_{\alpha}}(k)$ (cf.\ \cite[Theorem 1.174(d)]{besse}) we see that $\Ric_{g} =(n-2)\alpha g_{S^{n-1}} + O(r^{-2\epsilon})$. Hamilton's identity thus yields 
\begin{equation}
|\nabla f|^{2} = f+ O(r^{-2\epsilon}).
\end{equation}

We also estimate $\Phi_{\tau}$ in the asymptotic region 
\begin{equation*}\begin{split}
f(\Phi_{\tau}(p)) - f(0) & = \int_{0}^{\tau} df|_{\Phi_{s}(p)} (-X) ds\\
& =- \int_{0}^{\tau} |\nabla f|^{2} (\Phi_{s}(p))  ds\\
& = - \int_{0}^{\tau}( f(\Phi_{s}(p)) - R(\Phi_{s}(p)))ds\\
& \geq - \int_{0}^{\tau} f(\Phi_{s}(p)) ds.
\end{split}
\end{equation*}
The integrated Gr\"onwall's identity thus implies that $f(\Phi_{\tau}(p)) \geq f(p) e^{-\tau}$. As such, in the asymptotic region, we have that (writing $r(\cdot)$ for the radial coordinate in the asymptotic region)
\begin{equation}\label{eq:est-r-phi}
r(\Phi_{\tau}(p)) \geq  r(p) e^{-\tau/2}.
\end{equation}

Finally, we will need an estimate for $|\nabla R|$. By the conical asymptotics, $R = O(r^{-3\epsilon})$. Furthermore, because the asymptotics imply that $g$ has bounded curvature, we may apply Shi's local estimates in balls of unit radius and on the time interval $[1/2,1]$ to see that $|D^{m}R| \leq C_{m}$. Thus, using Hamilton's tensor interpolation inequalities \cite{Hamilton:3mflds} (and the fact that the Sobolev constant for balls of unit radius is bounded as $r\to\infty$) we see that 
\begin{equation}\label{eq:bd-nabla-R}
|\nabla R| = O(r^{-2\epsilon}).
\end{equation}

\section{A Lichnerowicz PDE for the Lie derivative of approximate KVFs}\label{sect:derEqns}

It will be important to recall that for any $(0,2)$-tensor $h$, and frame $\{e_{1},\dots,e_{n}\} \in T_{p}M$, the Lichnerowicz Laplacian is defined as
\begin{equation*}\begin{split}
(\Delta_{L} h)(e_{i},e_{k}) & = (\Delta h) (e_{i},e_{k}) + 2\sum_{j,l=1}^{n}R(e_{i},e_{j},e_{k},e_{l}) h(e_{j},e_{l})\\
& \qquad - h(\Ric(e_{i}),e_{k}) - h(e_{i},\Ric(e_{k})),
\end{split}\end{equation*}
where $\Delta = - \nabla^{*}\nabla$ is the usual ``rough'' connection Laplacian. By \cite[Proposition 2.3.7]{topping:RF}, for any vector field $W$
\begin{equation*}
\sL_{W}(\Ric)= -\frac 12 \Delta_{L} h + \frac 12 \sL_{Z}(g)
\end{equation*}
where $h = \sL_{W}(g)$ and 
\begin{equation*}
Z = \Div h - \frac 12 \nabla (\tr h) = \Delta W + \Ric(W).
\end{equation*}
By the soliton equation and the fact that $[\sL_{W},\sL_{X}]=\sL_{[W,X]}$
\begin{equation*}
\Delta_{L} h+ \sL_{X}(h)- h =  \sL_{Z-[W,X]}(g).
\end{equation*}
We compute
\begin{equation*}
\begin{split}
Z - [W,X] & = \Delta W + \frac 12 \sL_{X}(g)(W) - \frac 12 W - [W,X]\\
& = \Delta W + D_{W} X - [W,X] - \frac 12 W\\
& = \Delta W + D_{X} W - \frac 12 W.
\end{split}
\end{equation*}
Thus we have shown:
\begin{prop}\label{prop:lich-lie-deriv-AKVF}
If $W$ satisfies $\Delta W + D_{X}W - \frac 12 W = 0$, then $h = \sL_{W}(g)$ satisfies
\begin{equation*}
\Delta_{L} h + \sL_{X} h -h = 0.
\end{equation*}
\end{prop}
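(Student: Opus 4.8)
The plan is to prove the proposition as a special case of a general identity valid for \emph{every} vector field $W$, namely
\[
\Delta_L h + \sL_X(h) - h = \sL_{\,\Delta W + D_X W - \frac{1}{2}W}(g), \qquad h := \sL_W(g),
\]
and then simply observe that the hypothesis forces the vector field inside the Lie derivative on the right to vanish. No analysis is involved: the whole content is the algebra of Lie derivatives together with the soliton equation, so that the source term $\sL_{Z-[W,X]}(g)$ becomes $\sL_0(g)=0$.

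First I would record the standard linearization of the Ricci tensor along a Lie derivative. For any $W$, with $h=\sL_W(g)$, one has
\[
\sL_W(\Ric) = -\tfrac{1}{2}\Delta_L h + \tfrac{1}{2}\sL_Z(g), \qquad Z = \Div h - \tfrac{1}{2}\nabla(\tr h) = \Delta W + \Ric(W),
\]
which is \cite[Proposition 2.3.7]{topping:RF} together with the identification of $Z$ in terms of $W$. Next I would differentiate the soliton equation $2\Ric + g = \sL_X(g)$ by $\sL_W$; using the commutator rule $[\sL_W,\sL_X]=\sL_{[W,X]}$ this reads $2\sL_W(\Ric) + h = \sL_X(h) + \sL_{[W,X]}(g)$. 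Substituting the first identity to eliminate $\sL_W(\Ric)$ and rearranging then produces exactly the general identity above, with source $\sL_{Z-[W,X]}(g)$.

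The remaining step is the algebraic reduction of $Z-[W,X]$, which is where I expect the only real care to be needed. Since $X=\nabla f$ is a gradient field, the soliton equation yields $D_W X = \Ric(W) + \tfrac{1}{2}W$, and hence $\Ric(W) = D_W X - \tfrac{1}{2}W$; feeding this together with the torsion-free identity $[W,X] = D_W X - D_X W$ into $Z-[W,X]=\Delta W + \Ric(W) - [W,X]$ gives
\[
Z - [W,X] = \Delta W + D_X W - \tfrac{1}{2}W.
\]
Imposing the hypothesis $\Delta W + D_X W - \tfrac{1}{2}W = 0$ then makes the source vanish, and the general identity collapses to $\Delta_L h + \sL_X h - h = 0$. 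The hard part is not conceptual but purely a matter of bookkeeping: one must keep the repeated Lie-derivative commutations and the gradient identity $D_W X = \Ric(W) + \tfrac{1}{2}W$ mutually consistent in sign and scale, since an error in any factor would corrupt the zeroth-order coefficient of the resulting PDE --- and it is precisely the sign of that coefficient that makes the equation amenable to the maximum-principle arguments used later in the paper.
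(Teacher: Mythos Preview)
Your proof is correct and follows essentially the same route as the paper: both use the linearization formula $\sL_W(\Ric) = -\tfrac{1}{2}\Delta_L h + \tfrac{1}{2}\sL_Z(g)$ together with the soliton equation and the commutator $[\sL_W,\sL_X]=\sL_{[W,X]}$ to obtain $\Delta_L h + \sL_X h - h = \sL_{Z-[W,X]}(g)$, then reduce $Z-[W,X]$ to $\Delta W + D_X W - \tfrac{1}{2}W$ via the gradient identity $D_W X = \Ric(W) + \tfrac{1}{2}W$ and torsion-freeness. The only difference is cosmetic: you frame it as a general identity before specializing, whereas the paper presents the derivation leading directly to the proposition.
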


We will later use the following corollary to show that $2\Ric + g$ is a barrier for $\Delta_{L} h + \sL_{X}h - h = 0$.
\begin{coro}\label{coro:lich-eqn-Ric}
The vector field $X$ satisfies $\Delta X + D_{X}X - \frac 12 X = 0$ and thus $\sL_{X}(g) = 2\Ric + g$ satisfies
\begin{equation*}
\Delta_{L} (2\Ric + g) + \sL_{X}(2\Ric +g) -( 2\Ric + g)=0.
\end{equation*}
\end{coro}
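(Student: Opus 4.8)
The plan is to observe that the second assertion is an immediate consequence of the first: granting $\Delta X + D_X X - \frac12 X = 0$, we may apply Proposition \ref{prop:lich-lie-deriv-AKVF} with $W = X$ to conclude that $h := \sL_X(g)$ solves $\Delta_L h + \sL_X h - h = 0$, and the soliton equation identifies $\sL_X(g) = 2\Ric + g$. Thus the entire content lies in verifying the vector-field identity for $X = \nabla f$.

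To prove $\Delta X + D_X X - \frac12 X = 0$, I would argue directly from the gradient-soliton identities already recorded. The three ingredients are: (i) the standard Bochner commutation formula $\Delta(\nabla f) = \nabla(\Delta f) + \Ric(\nabla f)$ for the rough Laplacian acting on a gradient field, where $\Delta f$ denotes the scalar Laplacian; (ii) the gradient of the normalized trace identity $\Delta f + |\nabla f|^2 = \frac n2 + f$, which gives $\nabla(\Delta f) = \nabla f - \nabla|\nabla f|^2 = \nabla f - 2 D_X X$ (using $\nabla |\nabla f|^2 = 2\,\nabla^2 f(\nabla f) = 2 D_X X$); and (iii) the Hessian form of the soliton equation $\nabla^2 f = \Ric + \frac12 g$, evaluated on $\nabla f$, which gives $D_X X = \Ric(\nabla f) + \frac12 \nabla f$, i.e.\ $\Ric(\nabla f) = D_X X - \frac12 \nabla f$.

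Substituting (ii) and (iii) into (i) yields $\Delta X = (\nabla f - 2 D_X X) + (D_X X - \frac12 \nabla f) = \frac12 \nabla f - D_X X$, whence $\Delta X + D_X X - \frac12 X = 0$. The computation is routine; the only points demanding care are the sign in the commutation formula $\Delta(\nabla f) = \nabla(\Delta f) + \Ric(\nabla f)$ and the compatibility of the Hessian and Lie-derivative forms of the soliton equation (via $\sL_{\nabla f}(g) = 2\nabla^2 f$). There is no real obstacle: the corollary is a short algebraic consequence of the soliton structure, recorded here so that $2\Ric + g$ is available as a barrier for the Lichnerowicz-type equation in Section \ref{sect:lichPDE}.
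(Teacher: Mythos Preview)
Your proof is correct, but the paper takes a slightly different, more structural route. Recall from the computation preceding Proposition \ref{prop:lich-lie-deriv-AKVF} that for any vector field $W$ one has $\Delta W + D_{X}W - \tfrac12 W = Z - [W,X]$, where $Z = \Div(\sL_{W}g) - \tfrac12 \nabla(\tr \sL_{W}g)$. Setting $W = X$ gives $[X,X]=0$ and $Z = 2\Div(\Ric) - \nabla R$, which vanishes by the contracted second Bianchi identity; that is the whole argument in the paper. Your approach instead computes $\Delta(\nabla f)$ directly via the Bochner commutation formula together with the scalar identities $\Delta f + |\nabla f|^{2} = f + \tfrac n2$ and $\nabla^{2}f = \Ric + \tfrac12 g$ recorded earlier. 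Both arguments are short and correct; the paper's version recycles the $Z$-formula already derived and makes the role of the Bianchi identity explicit, while yours is a self-contained calculation from the soliton identities (which, of course, themselves encode Bianchi via Hamilton's identity).
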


\begin{proof}
The vanishing of 
\begin{equation*}
Z - [X,X]  = \Div(\sL_{X}(g)) - \frac 12 \nabla (\tr \sL_{X}(g)) = 2 \Div(\Ric) - \nabla R
\end{equation*}
follows from the contracted second Bianchi identity. 
\end{proof}

Alternatively, one could check that the conclusion of the above corollary is equivalent to the simpler equation $\Delta_{L}(\Ric) + \sL_{X}(\Ric) = 0$, which follows from the evolution of the Ricci tensor under Ricci flow as well as scale invariance of the Ricci tensor. We have left the equation in its more complicated form because this is how it will be used in the sequel.

\section{A Maximum principle for approximate KVFs} \label{sect:maxAKVF}

Suppose that $Q$ is a vector field on $M$ such that $|Q| = O(r^{-2\epsilon})$ for some $\epsilon < \frac {1}{\sqrt{2}}$. In this section, we will solve for a smooth vector field $V$ satisfying $\Delta V+D_{X}V - \frac 12 V = Q$ with $|V| = O(r^{-2\epsilon})$ and $|DV|_{g} = O(r^{-2\epsilon})$. 
 
 \begin{lemm}\label{lemm:max-princ-AKVF}
If $V$ satisfies $\Delta V+ D_{X}V - \frac 12 V = Q$ in $\{f \leq \rho^{2}\}$, then 
\begin{equation*}
\sup_{\{f\leq \rho^{2}\}}\left( |V| - B \left(f+\frac n 2\right)^{-\epsilon} \right) \leq \max \left\{\sup_{\{f=\rho^{2}\}} |V| - B\left(\rho^{2}+\frac n 2\right)^{-\epsilon}, 0\right\}
\end{equation*}
for some uniform constant $B> 0$. 
\end{lemm}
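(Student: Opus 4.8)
The plan is to reduce the vector-field inequality to a scalar differential inequality for $u := |V|$ and then run an elliptic comparison against the radial barrier $\psi := B\left(f+\frac n2\right)^{-\epsilon}$. First I would record, via Kato's inequality, that wherever $V \neq 0$ the function $|V|$ is smooth and satisfies $|V|\,\Delta|V| \geq \langle \Delta V, V\rangle$. Feeding in the equation in the form $\Delta V = Q - D_X V + \frac12 V$ and using $\langle D_X V, V\rangle = |V|\,D_X|V|$ together with $\langle Q,V\rangle \geq -|Q|\,|V|$, this yields the scalar inequality
\[
\Delta|V| + D_X|V| - \tfrac12|V| \geq -|Q|
\]
at every point where $V$ does not vanish. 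Since $\psi > 0$ everywhere, any point at which $G := |V| - \psi$ is positive is automatically a point where $V \neq 0$ and $|V|$ is smooth, so this is the only regularity one needs.

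The heart of the matter is to show that $\psi$ is a supersolution of the same operator. Writing $\phi := f + \frac n2$ and differentiating, I would use the two soliton identities from Section~\ref{sect:asympt-geo}, namely $|\nabla f|^2 = f - R$ and (its consequence) $\Delta f = \frac n2 + R$, to compute
\[
\Delta\psi + D_X\psi - \tfrac12\psi = B\phi^{-\epsilon}\Big[\epsilon(\epsilon+1)\phi^{-2}(f-R) - \epsilon - \tfrac12\Big].
\]
The key simplification is that the two first-order terms combine as $-\epsilon\phi^{-1}\big[(\tfrac n2 + R) + (f - R)\big] = -\epsilon\phi^{-1}\phi = -\epsilon$, leaving only the displayed bracket. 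Because $f - R = |\nabla f|^2 \geq 0$ and the positive-curvature hypothesis gives $R \geq 0$, one has $f - R \leq f$ and hence $\phi^{-2}(f-R) \leq \sup_{f\geq 0} f/(f+\tfrac n2)^2 = \tfrac 1{2n}$; with $n \geq 3$ and $\epsilon < \tfrac1{\sqrt2}$ the bracket is then bounded above by a strictly negative constant $-c$, so that $\Delta\psi + D_X\psi - \frac12\psi \leq -cB\phi^{-\epsilon}$ throughout $M$.

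Next I would fix $B$ uniformly so that $-cB\phi^{-\epsilon} \leq -|Q|$ everywhere: in the asymptotic region this follows from $|Q| = O(r^{-2\epsilon})$ and $\phi^{-\epsilon} \asymp r^{-2\epsilon}$ (using $f = r^2/4$), while on the compact core $|Q|$ is bounded and $\phi^{-\epsilon}$ is bounded below, so a single large $B$ suffices. With both inequalities in hand the comparison is routine: on the compact set $\{f \leq \rho^2\}$, if $G = |V| - \psi$ had an interior maximum $p_0$ with $G(p_0) > 0$ exceeding its boundary values, then $V(p_0) \neq 0$, $\nabla G(p_0) = 0$, and $\Delta G(p_0) \leq 0$; subtracting the supersolution inequality from the subsolution inequality gives $\Delta G + D_X G - \frac12 G \geq 0$ at $p_0$, whence $\Delta G(p_0) \geq \frac12 G(p_0) > 0$, a contradiction. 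This forces $\sup_{\{f\leq\rho^2\}}G \leq \max\{\sup_{\{f=\rho^2\}}G,\,0\}$, which is exactly the claimed estimate. The main obstacle is the barrier computation of the second paragraph: arranging the clean cancellation of the first-order terms and verifying that $\epsilon < \tfrac1{\sqrt2}$ (with $n \geq 3$ and $R \geq 0$) keeps the zeroth-order bracket negative is precisely what makes the radial weight $(f+\tfrac n2)^{-\epsilon}$ a genuine barrier; everything else is standard Kato-inequality and maximum-principle bookkeeping.
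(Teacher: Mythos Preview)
Your proposal is correct and follows essentially the same approach as the paper: Kato's inequality to pass to a scalar subsolution inequality for $|V|$, a direct computation showing $(f+\tfrac n2)^{-\epsilon}$ is a supersolution of the same operator, and the elliptic maximum principle applied to their difference on $\{f\le\rho^2\}$.

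The only noteworthy difference is in how you bound the barrier term. After arriving at the common expression (in your notation)
\[
B\phi^{-\epsilon}\Big[\epsilon(\epsilon+1)\phi^{-2}|\nabla f|^2-\epsilon-\tfrac12\Big],
\]
the paper writes $|\nabla f|^2=\phi-\tfrac n2-R$, drops the negative $(\tfrac n2+R)$--contribution, and uses $\phi>1$ to get the bracket $<-(\tfrac12-\epsilon^2)$; this is exactly where the hypothesis $\epsilon<\tfrac1{\sqrt2}$ is consumed. You instead use $|\nabla f|^2=f-R\le f$ and the calculus fact $\sup_{f\ge0}f/(f+\tfrac n2)^2=\tfrac1{2n}$, giving the bracket $\le \tfrac{\epsilon(\epsilon+1)}{2n}-\epsilon-\tfrac12$, which is strictly negative for $n\ge3$ well beyond the range $\epsilon<\tfrac1{\sqrt2}$. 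So your estimate is in fact a bit sharper than the paper's, and your remark that $\epsilon<\tfrac1{\sqrt2}$ is what ``keeps the zeroth-order bracket negative'' undersells your own bound; the restriction on $\epsilon$ is needed in the paper's version of the inequality, not yours. Either way, the argument goes through and the two proofs are otherwise identical in structure.
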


\begin{proof}
By the identities discussed in Section \ref{sect:asympt-geo}, $\Delta f + |\nabla f|^{2} = f + \frac n 2$ and $|\nabla f|^{2} + R = f$, (the latter implying that $f + \frac n 2 > 1$) we have that
\begin{align*}
& \Delta \left(f+\frac n 2\right)^{-\epsilon} + D_{X} \left(f+\frac n 2\right)^{-\epsilon} - \frac 12 \left(f+\frac n 2\right)^{-\epsilon}  \\
& = - \epsilon \left( f+\frac n 2\right)^{-\epsilon - 1} \Delta f +  \epsilon(\epsilon + 1) \left(f+\frac n 2\right)^{-\epsilon-2} |\nabla f|^{2}\\
& \qquad - \epsilon \left(f+\frac n 2\right)^{-\epsilon-1} |\nabla f|^{2} - \frac 12 \left(f+\frac n 2\right)^{-\epsilon}\\
& = - \epsilon \left( f+\frac n 2\right)^{-\epsilon - 1} \left( f + \frac n 2 \right) +  \epsilon(\epsilon + 1) \left(f+\frac n 2\right)^{-\epsilon-2} |\nabla f|^{2} - \frac 12 \left(f+\frac n 2\right)^{-\epsilon}\\
& = -\left( \epsilon+ \frac 12 \right) \left( f+\frac n 2\right)^{-\epsilon } +  \epsilon(\epsilon + 1) \left(f+\frac n 2\right)^{-\epsilon-2} |\nabla f|^{2} \\
& = -\left( \epsilon+ \frac 12 \right) \left( f+\frac n 2\right)^{-\epsilon } +  \epsilon(\epsilon + 1) \left(f+\frac n 2\right)^{-\epsilon-1} \\
& \qquad - \epsilon(\epsilon + 1) \left(f+\frac n 2\right)^{-\epsilon-2}\left(\frac n 2 +R\right) \\
& < -\left(  \frac 12 -  \epsilon^{2} \right) \left( f+\frac n 2\right)^{-\epsilon } 
\end{align*}
Because $|Q| = O(r^{-2\epsilon})$, we see that we may find $B >0$ so that 
\begin{equation*}
|Q| \leq B \left(  \frac 12 -  \epsilon^{2} \right) \left( f+\frac n 2\right)^{-\epsilon }.
\end{equation*}
We define the quantity $\varphi: = |V| - B\left(f+\frac n 2\right)^{-\epsilon}$. It is easy to check (cf.\ \cite[Proposition 5.1]{Brendle:3DSolitonUniqueness}) that Kato's inequality implies that 
\begin{equation*}
\Delta |V| + D_{X}|V| -\frac 12 |V| \geq - |Q|
\end{equation*}
when $V \not = 0$. By our choice of $B$, this implies
\begin{equation*}
\Delta \varphi + D_{X} \varphi - \frac 12 \varphi \geq 0
\end{equation*}
at all points where $\varphi \geq 0$. We may thus apply the maximum principle to $\varphi$.
 \end{proof}

\begin{prop}\label{prop:solve-for-vf-decay}
Still assuming that $|Q| = O(r^{-2\epsilon})$ (for $\epsilon < \frac {1}{\sqrt{2}}$), we may find a vector field $V$ which solves 
\begin{equation*}
\Delta V + D_{X} V - \frac 12 V = Q
\end{equation*}
on all of $M$, and so that $|V| = O(r^{-2\epsilon}),|DV| = O(r^{-2\epsilon})$.

\end{prop}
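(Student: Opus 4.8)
The plan is to obtain $V$ as a limit of solutions of Dirichlet problems on the sublevel sets $\Omega_\rho := \{f \le \rho^2\}$, which exhaust $M$, using Lemma \ref{lemm:max-princ-AKVF} to produce a bound uniform in $\rho$ and then passing to the limit.

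First I would solve $\Delta V_\rho + D_X V_\rho - \tfrac 12 V_\rho = Q$ on $\Omega_\rho$ with $V_\rho = 0$ on $\{f = \rho^2\}$. The cleanest way to see this is solvable is to weight by $e^f\,dV_g$: integration by parts gives
\[
\int \langle (\Delta + D_X) V, W\rangle\, e^f\, dV_g = -\int \langle \nabla V, \nabla W\rangle\, e^f\, dV_g,
\]
so $\Delta + D_X$ is the (self-adjoint) connection analogue of the drift Laplacian, and $-(\Delta + D_X - \tfrac12)$ carries the coercive Dirichlet form $\int_{\Omega_\rho}(|\nabla V|^2 + \tfrac 12 |V|^2)\,e^f\,dV_g$. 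Since $e^f$ is bounded above and below on the bounded set $\Omega_\rho$, Lax--Milgram yields a unique weak solution in the weighted $H^1_0(\Omega_\rho)$, which elliptic regularity promotes to a smooth $V_\rho$. (One could instead appeal to the Fredholm alternative, noting that the homogeneous problem has trivial kernel: if $Q = 0$ then $|V_\rho|$ is a subsolution of $\Delta + D_X - \tfrac12$, whose good-signed zeroth-order term forces $V_\rho \equiv 0$ by the maximum principle.)

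Because $V_\rho$ vanishes on $\{f = \rho^2\}$, Lemma \ref{lemm:max-princ-AKVF} gives at once
\[
|V_\rho| \le B\Big(f + \tfrac n2\Big)^{-\epsilon} \quad \text{on } \Omega_\rho,
\]
with $B$ independent of $\rho$. On any fixed compact set the coefficients of the equation are fixed smooth tensors and this $C^0$ bound is uniform, so interior Schauder estimates bound $V_\rho$ in $C^{2,\beta}$ on compacts independently of $\rho$; a diagonal argument then extracts a subsequence converging in $C^2_{\mathrm{loc}}$ to a solution $V$ on all of $M$, and the barrier bound passes to the limit to give $|V| \le B(f+\tfrac n2)^{-\epsilon} = O(r^{-2\epsilon})$.

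The matching gradient decay $|DV| = O(r^{-2\epsilon})$ is, I expect, the main obstacle. The difficulty is that the drift coefficient $|X| = |\nabla f| \sim r$ is unbounded: rescaling to the scale $\sim r^{-1}$ on which $D_X$ becomes order one (and on which the rescaled metric has uniformly bounded geometry) lets one run an interior gradient estimate, but converting the resulting bound back to the original scale reintroduces a factor of $r$, so one does not immediately recover the sharp exponent. To do better one must use the structure of the equation rather than treat $Q$ as a black box. Differentiating $\Delta V + D_X V - \tfrac12 V = Q$ as in Section \ref{sect:derEqns} produces a Bochner-type equation for $S = \nabla V$ in which the contribution $\nabla_i X^j = \Ric^j_i + \tfrac12 \delta^j_i$ of the drift cancels the favorable $-\tfrac12 S$, leaving only curvature terms of size $O(r^{-2\epsilon})$ together with a forcing term built from $\nabla Q$. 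The plan would then be to run a barrier argument for $|S|$ of the same type as Lemma \ref{lemm:max-princ-AKVF}, using the established bound $|V| = O(r^{-2\epsilon})$ to absorb the small curvature terms. The delicate points are that the cancellation of the zeroth-order term costs the clean supersolution property enjoyed in Lemma \ref{lemm:max-princ-AKVF}, and that the forcing involves $\nabla Q$, so that controlling $DV$ with the stated decay appears to require (and in the intended applications one has) corresponding control on the derivative of $Q$.
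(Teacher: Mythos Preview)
Your construction of $V$ via Dirichlet problems on $\{f\le\rho^2\}$, the use of Lemma \ref{lemm:max-princ-AKVF} for a uniform $C^0$ bound, and the diagonal extraction is exactly what the paper does, so the existence and $|V|=O(r^{-2\epsilon})$ are fine.

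The gradient estimate, however, has a genuine gap, and you have in fact already identified both problems yourself. First, your Bochner approach forces you to control $\nabla Q$, but the proposition assumes only $|Q|=O(r^{-2\epsilon})$; you cannot simply import extra hypotheses ``from the intended applications'' and still claim to have proved the stated result. Second, as you note, after differentiation the $-\tfrac12$ term is eaten by the $\tfrac12\delta^j_i$ piece of $\nabla_i X^j$, so the barrier mechanism of Lemma \ref{lemm:max-princ-AKVF} is no longer available for $|DV|$, and you give no replacement. As written, the argument for $|DV|=O(r^{-2\epsilon})$ is not complete.

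The paper sidesteps both issues by exploiting the self-similar structure rather than fighting the unbounded drift elliptically. Setting $\hat g(t)=t\,\Phi_{\log t}^*(g)$, $\hat V(t)=\Phi_{\log t}^*(V)$, and $\hat Q(t)=t^{-1}\Phi_{\log t}^*(Q)$, the elliptic equation $\Delta V + D_X V - \tfrac12 V = Q$ becomes the genuinely parabolic equation
\[
\partial_t \hat V = \Delta_{\hat g(t)}\hat V + \Ric_{\hat g(t)}(\hat V) - \hat Q,
\]
in which the drift $D_X$ has been absorbed into the time derivative. Standard interior parabolic gradient estimates on unit balls over $t\in[\tfrac12,1]$ (with uniformly controlled coefficients, thanks to the conical asymptotics) then give
\[
\sup_{\{r=r_m\}}|DV| \le C\sup_{[1/2,1]\times\{r_m-1\le r\le r_m+1\}}\bigl(|\hat V|_{\hat g(t)}+|\hat Q|_{\hat g(t)}\bigr),
\]
and the estimate $r(\Phi_\tau(p))\ge r(p)e^{-\tau/2}$ shows the right-hand side is $O(r_m^{-2\epsilon})$. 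The point is that parabolic interior estimates require only $C^0$ control of $\hat V$ and $\hat Q$, not their derivatives, so no assumption on $\nabla Q$ is needed.
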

 \begin{proof}

For a given $\rho_{m}\to \infty$, we may solve the Dirichlet problem 
\begin{equation*}
\begin{dcases}
\Delta V^{(m)} + D_{X} V^{(m)} - \frac 12 V^{(m)} = Q & \text{in $\{f\leq \rho_{m}^{2}\}$}\\
V^{(m)} = 0 & \text{on $\{f=\rho_{m}^{2}\}$}.
\end{dcases}
\end{equation*}
Applying Lemma \ref{lemm:max-princ-AKVF}, 
\begin{equation*}
|V^{(m)}| \leq B\left(f ^{2}+ \frac n 2\right)^{-\epsilon} \leq B \left( \frac n 2\right)^{-\epsilon}
\end{equation*}
on $\{f\leq \rho_{m}^{2}\}$. Furthermore, elliptic estimates show that the $|DV^{(m)}|$ are uniformly bounded on compact sets (along with higher derivatives). Thus, extracting a subsequence, we may take $m\to\infty$ to find a smooth vector field $V$ solving 
\begin{equation*}
\Delta V + D_{X} V- \frac 12 V = Q
\end{equation*}
with $|V|=O(r^{-2\epsilon})$. It thus remains to bound $|DV|$, which we now do by parabolic interior estimates. 
 
Recall that $\hat g(t) = t \Phi^{*}_{\log(t)} (g)$ is a solution to the Ricci flow. We define 
 \begin{equation*}
 \hat V(t) : = \Phi^{*}_{\log(t)}(V)
 \end{equation*}
 and
 \begin{equation*}
 \hat Q(t) := t^{-1} \Phi^{*}_{\log(t)} (Q).
 \end{equation*}
It is easy to see that $\hat V$ satisfies the parabolic PDE
\begin{equation}\label{eq:par-pde-V}
\frac{\partial}{\partial t} \hat V = \Delta_{\hat g(t)} \hat V +\Ric_{\hat g(t)} (\hat V)- \hat Q.
\end{equation}
 Fixing a sequence $r_{m}\to\infty$, we may use standard interior parabolic gradient estimates to conclude that
 \begin{equation*}
\begin{split}
\sup_{\{r=r_{m}\}} |DV| & = \sup_{\{r=r_{m}\}} |D\hat V|_{\hat g(1)} \\
& \leq C \sup_{t\in[1/2,1]}\sup_{\{r_{m} -1\leq r \leq r_{m}+1\}} |\hat V |_{\hat g(t)}\\
& + C  \sup_{t\in[1/2,1]}\sup_{\{r_{m} -1\leq r \leq r_{m}+1\}} |\hat Q|_{\hat g(t)}.
 \end{split}
  \end{equation*}
  We remark that the parabolic estimates apply with a uniform constant because we may control the ellipticity and lower order terms in \eqref{eq:par-pde-V} using the asymptotics of $g$. 
  
By the estimate on $r(\Phi_{\tau}(p))$ in the asymptotic region obtained in \eqref{eq:est-r-phi},
\begin{equation*}
\begin{split}
  \sup_{t\in[1/2,1]}\sup_{\{r_{m} -1\leq r \leq r_{m}+1\}} |\hat V|_{\hat g(t)} & =   \sup_{t\in[1/2,1]}\sup_{\{r_{m} -1\leq r \leq r_{m}+1\}} \Phi_{\log(t)}^{*}(|V|) \\
  & \leq \sup_{t\in[1/2,1]} C t^{-2\epsilon}(r_{m}-1)^{-2\epsilon}\\
  & = O(r_{m}^{-2\epsilon}).
  \end{split}
\end{equation*}
An identical argument for the $|\hat Q|_{\hat g(t)}$ term proves that $|DV| = O(r^{-2\epsilon})$.  
 \end{proof}

\section{A Maximum principle for the Lichnerowicz PDE}\label{sect:lichPDE}

The goal of this section is to prove the following proposition, which we will later use to conclude that certain vector fields are actually Killing vector fields.

\begin{prop}\label{prop:lich-eqn-barrier}
Suppose that a $(0,2)$-tensor $h$ satisfies $\Delta_{L} h + \sL_{X}(h) - h = 0$ with $|h| = o(1)$. Then $h\equiv 0$. 
\end{prop}

\begin{proof}
Because $(M,g)$ has positive sectional curvature, $2\Ric + g \geq g$. Thus, by the decay assumption on $h$, we may take $\theta$ large enough so that $\theta(2\Ric + g) \geq h$.
Taking the smallest such $\theta\geq 0$, let 
\begin{equation*}
w := 2\theta \Ric + \theta g - h \geq 0.
\end{equation*}
If $\theta \not = 0$ then there exists a point $p \in M$ and orthonormal basis $\{e_{1},\dots,e_{n}\} \in T_{p}M$ so that at $p$, $w(e_{1},e_{1})=0$, and $(R(e_{1},e_{k},e_{1},e_{l}))_{k,l\in \{1,\dots,n\}}$ is a diagonal matrix. Extending $\{e_{1},\dots,e_{n}\}$ to a local frame near $p$ that is parallel at $p$, the function $w(e_{1},e_{1})$ has a local minimum at $p$, which implies that $(\Delta w) (e_{1},e_{1}) \geq 0$ and $(D_{X} w) (e_{1},e_{1}) = 0$ at $p$. 

Notice that $\Delta_{L} w + \sL_{X} w - w = 0$ by Proposition \ref{prop:lich-lie-deriv-AKVF} and Corollary \ref{coro:lich-eqn-Ric}. For $i\in\{1,\dots,n\}$ evaluating this in the $(e_{i},e_{i})$ direction gives
\begin{align*}
0 & = (\Delta w)(e_{i},e_{i}) + 2\sum_{k,l=1}^{n} R(e_{i},e_{k},e_{i},e_{l}) w(e_{k},e_{l}) -2 w(\Ric(e_{i}),e_{i})\\
& \qquad + \sL_{X} (w(e_{i},e_{i})) - 2w(\sL_{X}e_{i},e_{i}) - w(e_{i},e_{i})\\
 & = (\Delta w)(e_{i},e_{i}) + 2\sum_{k,l=1}^{n} R(e_{i},e_{k},e_{i},e_{l}) w(e_{k},e_{l})- 2w(\Ric(e_{i}),e_{i})\\
& \qquad + D_{X} (w(e_{i},e_{i})) - 2w(D_{X} e_{i} - D_{e_{i}} X,e_{i}) - w(e_{i},e_{i})\\
 & = (\Delta w)(e_{i},e_{i}) + 2\sum_{k,l=1}^{n} R(e_{i},e_{k},e_{i},e_{l}) w(e_{k},e_{l})- 2w(\Ric(e_{i}),e_{i})\\
& \qquad + (D_{X} w)(e_{i},e_{i}) + 2w(D_{e_{i}} X,e_{i}) - w(e_{i},e_{i})\\
 & = (\Delta w)(e_{i},e_{i}) + 2\sum_{k,l=1}^{n} R(e_{i},e_{k},e_{i},e_{l}) w(e_{k},e_{l}) - 2w(\Ric(e_{i}),e_{i})\\
& \qquad + (D_{X} w)(e_{i},e_{i})) +  w(\sL_{X}g(e_{i}),e_{i}) - w(e_{i},e_{i})\\
& = (\Delta w)(e_{i},e_{i})+ (D_{X} w)(e_{i},e_{i}) + 2\sum_{k,l=1}^{n} R(e_{i},e_{k},e_{i},e_{l}) w(e_{k},e_{l}).
\end{align*}
Taking $i=1$ in the above formula, we thus have that at $p$
\begin{equation*}
0 \geq  2\sum_{k,l=1}^{n} R(e_{1},e_{k},e_{1},e_{l}) w(e_{k},e_{l}).
\end{equation*}
Because $(M,g)$ has positive sectional curvature and $(R(e_{1},e_{k},e_{1},e_{l})))_{k,l\in\{1,\dots,n\}}$ is diagonal at $p$, we thus see that $w(e_{k},e_{k}) = 0$ at $p$, for all $k \in \{1,\dots,n\}$. Thus, $\tr w = 0$ at $p$, so $\tr w$ achieves its minimum at $p$.  

Using the fact that the metric is compatible with the connection, the above identity implies that 
\begin{equation*}
\Delta \tr w + D_{X} \tr w = - 2 \sum_{k,l=1}^{n}\Ric(e_{k},e_{l}) w(e_{k},e_{l}) \leq 0.
\end{equation*}
We may thus apply Hopf's strong minimum principle to show that $\tr w \equiv 0$. Considering the asymptotic behavior of $\tr w$, we easily see that $\theta = 0$. Applying the above argument to $-h$ shows that $h\equiv 0$, as desired. 
\end{proof}

\section{Proof of Theorem \ref{theo:main-unique-theo}} \label{sect:proofMainTheo}

First, we use the conical asymptotics to establish the existence of approximate Killing vector fields:
\begin{prop}
There exist vector fields $U_{a}$ for $a \in \left\{ 1 ,\dots,\frac{n(n-1)}{2}\right\}$ so that $|\sL_{U_{a}}| = O(r^{-2\epsilon})$ and $|\Delta U_{a} + D_{X} U_{a} - \frac 12 U_{a} | = O(r^{-2\epsilon})$. Furthermore, $|U_{a}|= O(r)$ and
\begin{equation*}
\sum_{a=1}^{\frac{n(n-1)}{2}} U_{a}\otimes U_{a} = r^{2} \sum_{i=1}^{n-1} \tilde e_{i}\otimes \tilde e_{i} + O( r^{2-2\epsilon})
\end{equation*}
where $\{\tilde e_{1}, \dots,\tilde e_{n-1}\}$ is a local orthonormal frame on $\Sigma_{r} = \{ f = r^{2}/4\}$.
\end{prop}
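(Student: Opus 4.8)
The plan is to realize the $U_a$ as the images under $F$ of the rotational Killing fields of the model cone $g_\alpha$, suitably normalized, and to measure their failure to be genuine symmetries of the soliton against the conical error $k$ and against the deviation of $X$ from the radial direction. The guiding principle is that a vector field $W$ that is simultaneously Killing and commutes with $X$ (equivalently, preserves $f$) solves $\Delta W + D_X W - \frac12 W = 0$ \emph{exactly}: Bochner gives $\Delta W = -\Ric(W)$, while the soliton equation gives $D_X W = D_W X = \Ric(W) + \frac12 W$, and the two extra terms cancel. Concretely, let $\Omega_1,\dots,\Omega_{n(n-1)/2}$ be the standard basis of rotational Killing fields of $(S^{n-1},g_{S^{n-1}})$, for which the ambient computation with $\Omega_{ij}=x_i\partial_j - x_j\partial_i$ gives the pointwise identity $\sum_a \Omega_a\otimes\Omega_a = g_{S^{n-1}}^\sharp$ (the cometric). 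Each $\Omega_a$ extends to the field $\widehat\Omega_a$ on the cone that is tangent to the spheres $\{r=\mathrm{const}\}$ and independent of $r$; then $\widehat\Omega_a(r)=0$, $[\partial_r,\widehat\Omega_a]=0$, and $\widehat\Omega_a$ is Killing for $g_\alpha$. I would set $U_a := (1-\alpha)^{-1/2} F_*\widehat\Omega_a$ on the image of $F$ and extend it smoothly and arbitrarily across the compact core of $M$; since all the asserted bounds concern $r\to\infty$, the extension is irrelevant. As $|\widehat\Omega_a|_{g_\alpha}=O(r)$ and $g$ is uniformly equivalent to $g_\alpha$ near infinity, $|U_a|=O(r)$.

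To estimate $\sL_{U_a}(g)$, I would use that the Lie derivative commutes with pullback and that $\widehat\Omega_a$ is Killing for $g_\alpha$, so that $F^*(\sL_{U_a}g) = (1-\alpha)^{-1/2}\,\sL_{\widehat\Omega_a}(g_\alpha + k) = (1-\alpha)^{-1/2}\,\sL_{\widehat\Omega_a}k$. Writing $\sL_{\widehat\Omega_a}k = \nabla_{\widehat\Omega_a}k + k(\nabla\widehat\Omega_a,\cdot) + k(\cdot,\nabla\widehat\Omega_a)$ and using $|\widehat\Omega_a|=O(r)$, $|\nabla^{g_\alpha}\widehat\Omega_a|=O(1)$ (a linearly growing Killing field on a metric cone has scale-invariant, hence bounded, covariant derivative), together with $|\nabla^j k|=O(r^{-3\epsilon - j})$, yields $|\sL_{U_a}g|=O(r^{-3\epsilon})$ and $|\nabla \sL_{U_a}g|=O(r^{-3\epsilon-1})$. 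In particular $|\sL_{U_a}g|=O(r^{-2\epsilon})$, with room to spare.

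The heart of the argument is the bound on $\Delta U_a + D_X U_a - \frac12 U_a$. Here I would invoke the exact identity from Section \ref{sect:derEqns}: for any $W$ with $h=\sL_W g$,
\[
\Delta W + D_X W - \tfrac12 W = \Div h - \tfrac12\nabla(\tr h) - [W,X].
\]
The first two terms are controlled by $|\nabla \sL_{U_a}g|=O(r^{-3\epsilon-1})$, so everything reduces to estimating $[U_a,X]$. This is where I would use condition (2) of Definition \ref{defi:asymp-cone-sol}: since $\partial F/\partial r$ is proportional to $X$, the field $X$ has \emph{no angular component} in cone coordinates, i.e.\ $X=\phi\,\partial_r$; pairing $X=\nabla f$ with $\partial_r$ and using $|\nabla f|^2+R=f$ with $f=r^2/4$ gives $\phi = 2|X|^2/r = \frac r2 - \frac{2R}{r}$. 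Because $\widehat\Omega_a(r)=0$ and $[\widehat\Omega_a,\partial_r]=0$,
\[
[U_a,X] = (1-\alpha)^{-1/2}\,[\widehat\Omega_a,\phi\,\partial_r] = (1-\alpha)^{-1/2}(\widehat\Omega_a\phi)\,\partial_r = -(1-\alpha)^{-1/2}\,\tfrac{2}{r}\,(\widehat\Omega_a R)\,\partial_r.
\]
Since $\widehat\Omega_a R = dR(\widehat\Omega_a)$ is bounded by $|\nabla R|\,|\widehat\Omega_a| = O(r^{-2\epsilon})\cdot O(r) = O(r^{1-2\epsilon})$ by the gradient estimate \eqref{eq:bd-nabla-R}, we get $|[U_a,X]|=O(r^{-2\epsilon})$, and hence $|\Delta U_a + D_X U_a - \frac12 U_a| = O(r^{-2\epsilon})$. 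I expect this to be the main obstacle: it requires controlling how far $X$ is from the radial cone field at the level of \emph{one angular derivative}, and it is precisely the point where the soliton structure — through $|\nabla R|=O(r^{-2\epsilon})$ — is needed, rather than the bare geometry of the cone.

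Finally, for the tensor identity, I would combine $\sum_a \Omega_a\otimes\Omega_a=g_{S^{n-1}}^\sharp$ with the relation $g_{S^{n-1}}^\sharp=(1-\alpha)r^2\sum_i \tilde e_i\otimes\tilde e_i$ valid for a $g_\alpha$-orthonormal frame $\{\tilde e_i\}$ of $\Sigma_r$ (since the induced metric is $(1-\alpha)r^2 g_{S^{n-1}}$). The normalization then gives $\sum_a \widehat\Omega_a\otimes\widehat\Omega_a = (1-\alpha)r^2\sum_i\tilde e_i\otimes\tilde e_i$, so $\sum_a U_a\otimes U_a$ pulls back under $F$ to exactly $r^2\sum_i\tilde e_i\otimes\tilde e_i$ for the $g_\alpha$-orthonormal frame. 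Replacing this by a $g$-orthonormal frame on $\Sigma_r$ introduces a relative error $O(|k|)=O(r^{-3\epsilon})$, which after multiplication by $r^2$ is $O(r^{2-3\epsilon})\subseteq O(r^{2-2\epsilon})$, giving the stated identity.
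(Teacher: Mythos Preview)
Your argument is correct and follows essentially the same route as the paper: push forward the rotational Killing fields of the cone via $F$, use the identity $\Delta W + D_X W - \tfrac12 W = \Div h - \tfrac12\nabla(\tr h) - [W,X]$ from Section~\ref{sect:derEqns}, and bound the bracket $[U_a,X]$ by exploiting that $X$ is proportional to $\partial_r$ (from condition~(2) of Definition~\ref{defi:asymp-cone-sol}) together with the gradient estimate $|\nabla R|=O(r^{-2\epsilon})$. Your presentation is in fact somewhat more explicit than the paper's---you track the normalization constant $(1-\alpha)^{-1/2}$ and the formula $\phi=\tfrac r2 - \tfrac{2R}{r}$ for the radial coefficient of $X$---but the structure of the proof is the same.
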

\begin{proof}
Note that there are Killing vector fields $\overline U_{a}$ for $g_{\alpha}$ on $S^{n-1}\times (r_{0},\infty)$ given by radially extending a basis for the Killing vector fields on the sphere. In particular, $\sL_{\overline U_{a}} g_{\alpha} = 0$ and $\Div (\sL_{\overline U_{a}} g_{\alpha}) - \frac 12 \nabla(\tr \sL_{\overline U_{a}}g_{\alpha})=0$. Furthermore it is not hard to see that by rescaling the $\overline U_{a}$ if necessary, we have that 
\begin{equation*}
\sum_{a=1}^{\frac{n(n-1)}{2}} \overline U_{a}\otimes\overline U_{a} = r^{2} \sum_{i=1}^{n-1} \overline e_{i}\otimes \overline e_{i},
\end{equation*}
where $\{\overline e_{1},\dots,\overline e_{n-1}\}$ is a local orthonormal frame for $S^{n-1}\times\{r\}$ with respect to $g_{\alpha}$. We may find vector fields $U_{a}$ on $M$ so that on the image of $F$, we have that $F_{*}\overline U_{a} =  U_{a}$ (we may extend them arbitrarily into the compact region, as only their asymptotic behavior will matter). Because $g$ is asymptotically conical, $|\sL_{U_{a}} g | = |\sL_{U_{a}} k| =  O(r^{-2\epsilon})$ and  
\begin{equation*}
\left|\Div (\sL_{U_{a}} g) - \frac 12 \nabla(\tr \sL_{U_{a}}g)\right| = \left|\Div (\sL_{U_{a}} k) - \frac 12 \nabla(\tr \sL_{U_{a}}k)\right|=O(r^{-2\epsilon}).
\end{equation*}
Because we have assumed that $F$ parametrizes the level sets of $f$, we have that
\begin{equation*}
\left[ \sqrt{f} \frac{X}{|X|^{2}}, U_{a}\right] = 0.
\end{equation*}
As such, 
\begin{equation*}
\begin{split}
[X,U_{a}] & = U_{a}\left( \frac{|X|^{2}}{\sqrt{f}} \right)\sqrt{f}\frac{X}{|X|^{2}}\\
& = U_{a}(|X|^{2}) \frac{X}{|X|^{2}}\\
& = - U_{a}(R) \frac{X}{|X|^{2}}.
\end{split}
\end{equation*}
Using \eqref{eq:bd-nabla-R}, we thus have that $|[X,U_{a}]| \leq |\nabla R| |U_{a}||X|^{-1} = O(r^{-2\epsilon})$. 
This may easily be used to show that $|\Delta U_{a} + D_{X} U_{a} - \frac 12 U_{a}| = O(r^{-2\epsilon})$. Finally, the tensorial identity follows readily from the asymptotics of the metric. 
\end{proof}
\begin{theo}
Suppose that $U$ is a vector field on $M$ with $|\sL_{U} g| = O(r^{-2\epsilon})$ and $|\Delta U + D_{X} U - \frac 12 U| = O(r^{-2\epsilon})$ for some $\epsilon < \frac {1}{\sqrt{2}}$. Then, there exists a vector field $W$ so that $\sL_{W} g = 0$, $[W,X] = 0$, $\bangle{W,X} = 0$ and $|W - U| \leq O(r^{-2\epsilon})$.
\end{theo}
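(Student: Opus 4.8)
**

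The goal is to produce, from an approximate Killing vector field $U$, an exact Killing vector field $W$ that is close to $U$ and respects the soliton structure. The strategy is to correct $U$ by subtracting a small vector field $V$ obtained from the machinery of Sections \ref{sect:maxAKVF} and \ref{sect:lichPDE}, then verify the additional properties $[W,X]=0$ and $\bangle{W,X}=0$.

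\medskip

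\textbf{Step 1: Correct $U$ to an exact solution of the vector-field PDE.}
Set $Q := \Delta U + D_X U - \frac 12 U$, so that $|Q| = O(r^{-2\epsilon})$ by hypothesis. By Proposition \ref{prop:solve-for-vf-decay}, there is a vector field $V$ solving $\Delta V + D_X V - \frac 12 V = Q$ with $|V|, |DV| = O(r^{-2\epsilon})$. Define $W := U - V$. Then by construction $\Delta W + D_X W - \frac 12 W = 0$, and $|W - U| = |V| = O(r^{-2\epsilon})$.

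\medskip

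\textbf{Step 2: Show $W$ is Killing.}
Let $h := \sL_W(g)$. By Proposition \ref{prop:lich-lie-deriv-AKVF}, $h$ satisfies the Lichnerowicz PDE $\Delta_L h + \sL_X h - h = 0$. To apply Proposition \ref{prop:lich-eqn-barrier}, I must check $|h| = o(1)$. Write $h = \sL_U(g) - \sL_V(g)$. The first term is $O(r^{-2\epsilon})$ by hypothesis. For the second, $\sL_V(g)$ involves $DV$, which is $O(r^{-2\epsilon})$, so $|\sL_V(g)| = O(r^{-2\epsilon})$ as well; hence $|h| = O(r^{-2\epsilon}) = o(1)$. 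Proposition \ref{prop:lich-eqn-barrier} then forces $h \equiv 0$, i.e. $\sL_W(g) = 0$, so $W$ is a Killing vector field.

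\medskip

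\textbf{Step 3: Upgrade to $[W,X]=0$ and $\bangle{W,X}=0$.}
Since $W$ is Killing, its flow preserves $g$, hence preserves $\Ric$ and the scalar curvature $R$; because $X = \nabla f$ is determined by the soliton structure (recall $\sL_X g = 2\Ric + g$), one expects the flow of $W$ to preserve $X$, giving $[W,X]=0$. The precise route: commuting a Killing field with the soliton structure, $[W,X]$ should itself solve a homogeneous equation with good decay, forcing it to vanish; alternatively, one differentiates the identity $\sL_W(2\Ric + g) = 0$ against the soliton equation. Finally, once $[W,X]=0$, the function $\bangle{W,X} = W(f)$ satisfies a nice equation (its gradient is controlled by the Killing and commuting properties), and combined with its decay $\bangle{W,X} = O(r^{1-2\epsilon})$ — or by a direct Hessian computation using that $W$ is Killing and $\mathrm{Hess}\, f = \Ric + \frac 12 g$ — one shows it is forced to vanish identically.

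\medskip

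\textbf{Main obstacle.}
The first two steps are essentially assembly of the earlier results and the verification of the decay rates, which I expect to be routine. The genuine content is Step 3: establishing $[W,X]=0$ and $\bangle{W,X}=0$. The crux is to exploit that $W$ is now an \emph{exact} Killing field, so its flow is an isometry preserving every Riemannian invariant, and to leverage the rigidity of the soliton structure (the relation between $X$, $\Ric$, and $f$) to pin down the commutator and the inner product. Making the decay estimates on $[W,X]$ interact correctly with a maximum-principle or uniqueness argument — rather than merely bounding things — is where the care is needed.
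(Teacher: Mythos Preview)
Your Steps 1 and 2 are correct and match the paper's argument essentially verbatim.

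Step 3 is where you have a gap: you correctly identify it as the remaining content, but the actual argument is much shorter and purely algebraic --- no further decay estimates or maximum principles are needed. Once $\sL_W g = 0$, the standard identity for Killing fields gives $\Delta W + \Ric(W) = \Div(\sL_W g) - \tfrac12 \nabla(\tr \sL_W g) = 0$. Subtracting this from $\Delta W + D_X W - \tfrac12 W = 0$ yields $D_X W = \Ric(W) + \tfrac12 W$; but the soliton equation $2\Ric + g = 2D^2 f$ says exactly $\Ric(W) + \tfrac12 W = D_W X$, so $[W,X] = D_W X - D_X W = 0$ drops out immediately.

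For $\bangle{W,X}=0$: since $W$ is Killing, $\sL_W$ commutes with the musical isomorphism, so $\nabla(\sL_W f) = \sL_W(\nabla f) = [W,X] = 0$. Thus $\bangle{W,X} = W(f)$ is constant. Because $f$ attains its minimum at some interior point (where $X=\nabla f$ vanishes), the constant is zero. Your proposed route via decay of $\bangle{W,X}$ would not work directly, since a priori you only have $|W| = O(r)$ and $|X| = O(r)$, giving no decay for the inner product; the point is that constancy plus a single vanishing point suffices.
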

\begin{proof}
Using Proposition \ref{prop:solve-for-vf-decay}, we may find a vector field $V$ so that 
\begin{equation*}
\Delta(U-V) + D_{X}(U-V) - \frac 12 (U-V) = 0
\end{equation*}
and $|V| = O(r^{-2\epsilon})$, $|DV| = O(r^{-2\epsilon})$. Setting $W = U-V$, we thus have that $|\sL_{W} g| = O(r^{-2\epsilon})$. Using Proposition \ref{prop:lich-eqn-barrier}, we thus see that $\sL_{W} g = 0$. This implies that $\Delta W + \Ric(W) = 0$, and combined with $\Delta W + D_{X} W - \frac 12 W = 0$, we thus see that $[W,X] = 0$. Finally, because $W$ is a Killing vector field
\begin{equation*}
\nabla(\sL_{W} f) = \sL_{W}( \nabla f) = [W,X] = 0. 
\end{equation*}
Thus $\sL_{W} f= \bangle{W,X}$ must be constant. However, $f$ attains its minimum somewhere in the compact region so in fact $\bangle{W,X} = 0$.
\end{proof}
Applying this to each of the approximate Killing vectors constructed above yields the following. 
\begin{coro}
There are vector fields $W_{a}$ for $a \in \left\{ 1 ,\dots,\frac{n(n-1)}{2}\right\}$ so that $\sL_{W_{a}} g = 0$, $[W_{a},X] = 0$ and $\bangle{W_{a},X} = 0$. Furthermore, $|W_{a}| = O(r)$ and 
\begin{equation*}
\sum_{a=1}^{\frac{n(n-1)}{2}}W_{a}\otimes W_{b} = r^{2}\sum_{i=1}^{n-1} \tilde e_{i}\otimes \tilde e_{i}+ O(r^{2-2\epsilon})
\end{equation*}
where $\{\tilde e_{1}, \dots,\tilde e_{n-1}\}$ is a local orthonormal frame on $\Sigma_{r} = \{ f = r^{2}/4\}$.
\end{coro}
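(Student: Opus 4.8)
The plan is to apply the preceding Theorem directly to each of the approximate Killing vector fields $U_{a}$ produced by the Proposition above. For each fixed $a$, the field $U_{a}$ satisfies $|\sL_{U_{a}} g| = O(r^{-2\epsilon})$ and $|\Delta U_{a} + D_{X} U_{a} - \frac 12 U_{a}| = O(r^{-2\epsilon})$, which are precisely the hypotheses of the Theorem (and $\epsilon < \frac{1}{\sqrt 2}$ throughout). Thus the Theorem furnishes a vector field $W_{a}$ with $\sL_{W_{a}} g = 0$, $[W_{a},X] = 0$, and $\bangle{W_{a},X} = 0$, together with the decay estimate $|W_{a} - U_{a}| \leq O(r^{-2\epsilon})$. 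This gives the three desired Killing-type identities at once.

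Next I would record the growth bound. Since $|U_{a}| = O(r)$ and $|W_{a} - U_{a}| = O(r^{-2\epsilon})$, the triangle inequality gives $|W_{a}| \leq |U_{a}| + |W_{a} - U_{a}| = O(r)$, as claimed.

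The only remaining point is the tensorial identity, which I would deduce from the corresponding identity for the $U_{a}$. Writing $E_{a} := W_{a} - U_{a}$, so that $|E_{a}| = O(r^{-2\epsilon})$, I expand
\begin{equation*}
W_{a}\otimes W_{a} = U_{a}\otimes U_{a} + U_{a}\otimes E_{a} + E_{a}\otimes U_{a} + E_{a}\otimes E_{a}.
\end{equation*}
The two cross terms have size $O(r)\cdot O(r^{-2\epsilon}) = O(r^{1-2\epsilon})$ and the last term is $O(r^{-4\epsilon})$; both are dominated by $O(r^{2-2\epsilon})$ for large $r$. Summing over $a$ and inserting the Proposition's identity $\sum_{a} U_{a}\otimes U_{a} = r^{2}\sum_{i} \tilde e_{i}\otimes \tilde e_{i} + O(r^{2-2\epsilon})$ then yields
\begin{equation*}
\sum_{a}W_{a}\otimes W_{a} = r^{2}\sum_{i=1}^{n-1}\tilde e_{i}\otimes \tilde e_{i} + O(r^{2-2\epsilon}),
\end{equation*}
which is the assertion (correcting the evident typo $W_{a}\otimes W_{b}$ to $W_{a}\otimes W_{a}$ in the statement).

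Since every step reduces to invoking the two previous results and tracking error terms, there is no genuine obstacle here; the only point requiring care is verifying that the cross terms $O(r^{1-2\epsilon})$, arising from pairing the $O(r)$ growth of $U_{a}$ against the $O(r^{-2\epsilon})$ decay of $E_{a}$, are absorbed into the stated error $O(r^{2-2\epsilon})$, which holds because $1 - 2\epsilon \leq 2 - 2\epsilon$.
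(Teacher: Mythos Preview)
Your proposal is correct and matches the paper's approach: the paper simply states that the corollary follows by applying the preceding Theorem to each of the approximate Killing vector fields $U_{a}$, without spelling out the details. Your explicit verification of the growth bound and the tensorial identity via the expansion $W_{a}\otimes W_{a} = U_{a}\otimes U_{a} + U_{a}\otimes E_{a} + E_{a}\otimes U_{a} + E_{a}\otimes E_{a}$ is exactly the intended argument.
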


This implies Theorem \ref{theo:main-unique-theo} as follows. The above corollary clearly implies that $(M,g)$ is rotationally symmetric, at least outside of some compact set. This is because we have shown that the Killing vectors $W_{a}$ span an $(n-1)$-dimensional space at each point in the asymptotic region. In particular, if $n=3$, this implies that the Cotton tensor vanishes outside of some compact set, while in dimensions $n\geq 4$, this implies that the Weyl tensor vanishes outside of some compact set. By the classical result of Bando, $(M,g)$ must be real analytic \cite{Bando:AnalyticRF}. Thus, we see that the Cotton tensor and Weyl tensor are also real analytic, and so if they vanish in an open set then they must vanish identically. This shows that $(M,g)$ must be locally conformally flat. However, this is well known to imply rotational symmetry, cf.\ \cite[Theorem 5.8 and 5.9]{CaoCatinoChenMantegazzaMazzieri:BachFlat} for a result that includes this statement as an obvious corollary. 

\appendix
\section{Expanding Bryant Solitons}

In this appendix, we describe the rotationally symmetric expanding solitons (with positive sectional curvature) constructed by Bryant in the unpublished note \cite{Bryant:Solitons}. In particular, we check below that they satisfy the conditions of Definition \ref{defi:asymp-cone-sol}. We remark that Bryant's family extends past the Gaussian (flat) soliton to continue into a family of negatively curved rotationally symmetric expanding gradient solitons, which we do not discuss here (see the discussion in \cite[Corollary 3]{Bryant:Solitons}).

It is standard (see, e.g., \cite[Section 2.3]{Petersen:RiemGeo}) that for a warped product metric of the form $g = dt^{2} + a(t)^{2} g_{S^{n-1}}$,
\begin{equation*}
\Ric = -(n-1)\frac{a''(t)}{a(t)}dt^{2} + ((n-2)-a(t)a''(t)-(n-2)a'(t)^{2})g_{S^{n-1}}.
\end{equation*}
In fact, we recall for later use that the metric $g$ has sectional curvature in the radial direction given by $-\frac{a''(t)}{a(t)}$ and for planes tangent to the orbits of rotation given by $\frac{1-(a'(t))^{2} } {a(t)^{2} }$. Furthermore, for a function $f(t)$, the Hessian of $f$ with respect to the metric $g$ is given by
\begin{equation*}
D^{2} f = f'' (t)dt^{2} + a(t)a' (t)f'(t) g_{S^{n-1}}.
\end{equation*}

Thus, we see that the soliton equations $2D^{2}f = g + 2\Ric$ are equivalent to the following family of ODEs:
\begin{equation*}
\begin{split}
2 f''(t) & = 1 - 2(n-1)\frac{a''(t)}{a(t)}\\
2 a(t) a'(t) f'(t) &  =  a(t)^{2}+ 2 ((n-2)-a(t)a''(t) - (n-2)a'(t)^{2}).
\end{split}
\end{equation*}
Supposing that there is a fixed point of the rotation, i.e., $t_{0} \in \RR$ so that $a(t_{0}) =0$ (by translating, we may assume that $t_{0} = 0$), the second soliton equation clearly implies that $a'(0) = \pm 1$. By reversing the $t$-variables if necessary, we thus may assume that $a'(t) > 0$ on $[0,T)$ for some $T>0$. In this region, it is convenient to change radial coordinates, from $t$ to $a = a(t)$. The metric in these coordinates may now be written 
\begin{equation*}
g = \frac{da^{2}}{\omega(a^{2})} + a^{2}g_{S^{n-1}}
\end{equation*}
where $\omega(a^{2})$ is defined implicitly by
\begin{equation*}
a'(t) = \sqrt{\omega(a(t)^{2})}.
\end{equation*}
The Ricci tensor in these coordinates is given by
\begin{equation*}
\Ric = -(n-1) \omega'(a^{2}) \frac{da^{2} }{ \omega(a^{2})} +((n-2) - \omega'(a^{2})a^{2} - (n-2) \omega(a^{2})) g_{S^{n-1}},
\end{equation*}
and the Hessian of $f(a^{2})$ by
\begin{equation*}\begin{split}
D^{2}f = & \left( 4 f''(a^{2}) a^{2}\omega(a^{2})+ 2 f'(a^{2})\omega(a^{2})  + 2 f'(a^{2})a^{2} \omega'(a^{2}) \right) \frac{da^{2}}{ \omega(a^{2})}  \\
& \qquad + 2f'(a^{2})  \omega(a^{2}) a^{2} g_{S^{n-1}}.
\end{split}\end{equation*}
In particular, the expanding soliton equations imply that the following system of ODEs must hold
\begin{equation}\label{eq:app-sol-eqns}
\begin{split}
 1 - 2(n-1) \omega'(s)  &  = 8 f''(s) s \omega(s)+ 4 f'(s)\omega(s)  + 4 f'(s)s \omega'(s)\\
 4 f'(s) s \omega(s) & = s + 2 ((n-2) - \omega'(s) s - (n-2) \omega(s)),
\end{split}
\end{equation}
where we have set $s=a^{2}$. Differentiating the second equation in $s$, we may eliminate the dependence on $f$ in the first equation, obtaining 
\begin{equation}\label{eq:app-eqn-h}
4 s^{2}\omega(s) \omega''(s) = 2(n-2)\omega(s)(\omega(s)-1)+s\omega'(s)(2s\omega'(s)-s-2(n-2)).
\end{equation}
\begin{lemm}[{\cite[Lemma 1]{Bryant:Solitons}}]
For $\omega(s)$ a positive solution of \eqref{eq:app-eqn-h}, defined for $s \in [0,M) \subset (0,\infty)$. Then, either $\omega\equiv 1$ or $\omega$ has at most one critical point in $(0,M)$ which is nondegenerate if it exists. Furthermore, if $\omega'(s_{0}) \geq 0$ and $\omega(s_{0}) >1$ for $s_{0}\in (0,M)$, then $\omega'(s)>$ for $s \in (s_{0},M)$. Similarly, if $\omega'(s_{0}) \leq 0$ and $\omega(s_{0}) <1$ then $\omega'(s) < 0$ on $(s_{0},M)$. 
\end{lemm}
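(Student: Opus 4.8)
The plan is to read \eqref{eq:app-eqn-h} as a second-order ODE that is regular on $(0,M)$ for positive $\omega$. Solving for the top-order term,
\begin{equation*}
\omega''(s) = \frac{2(n-2)\omega(s)(\omega(s)-1) + s\omega'(s)\bigl(2s\omega'(s) - s - 2(n-2)\bigr)}{4 s^{2}\omega(s)},
\end{equation*}
exhibits the right-hand side as a smooth (indeed analytic) function of $(s,\omega,\omega')$ once $s>0$ and $\omega>0$. The observation that drives everything is that at a critical point $s_{0}\in(0,M)$, where $\omega'(s_{0})=0$, the last two terms of the numerator drop out and
\begin{equation*}
\omega''(s_{0}) = \frac{(n-2)(\omega(s_{0})-1)}{2 s_{0}^{2}},
\end{equation*}
so that, since $n\geq 3$ makes $n-2>0$, the sign of $\omega''(s_{0})$ equals the sign of $\omega(s_{0})-1$.

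First I would rule out the degenerate case. If some critical point had $\omega(s_{0})=1$, then $(\omega(s_{0}),\omega'(s_{0}))=(1,0)$ are precisely the initial data of the constant solution $\omega\equiv 1$; uniqueness for the ODE on the connected interval $(0,M)$ then forces $\omega\equiv 1$. Hence, assuming $\omega\not\equiv 1$, every critical point has $\omega(s_{0})\neq 1$, and by the formula for $\omega''(s_{0})$ it is nondegenerate: a strict local minimum if $\omega(s_{0})>1$ and a strict local maximum if $\omega(s_{0})<1$.

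Next I would prove the two monotonicity assertions, from which uniqueness of the critical point will follow. Assume $\omega'(s_{0})\geq 0$ and $\omega(s_{0})>1$. Then $\omega'>0$ on a right-neighborhood of $s_{0}$ --- immediately if $\omega'(s_{0})>0$, and from $\omega''(s_{0})>0$ if $\omega'(s_{0})=0$. Were $\omega'$ to fail to remain positive, let $s_{1}$ be the first point of $(s_{0},M)$ at which $\omega'(s_{1})=0$; then $\omega'>0$ on $(s_{0},s_{1})$, so $\omega(s_{1})>\omega(s_{0})>1$ and therefore $\omega''(s_{1})>0$. But $\omega''(s_{1})>0$ with $\omega'(s_{1})=0$ forces $\omega'<0$ just to the left of $s_{1}$, contradicting $\omega'>0$ on $(s_{0},s_{1})$. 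The second assertion follows from the same argument with all inequalities reversed.

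Finally, uniqueness drops out: if $\omega\not\equiv 1$ has a critical point $s_{0}$, then $\omega(s_{0})\neq 1$, and applying the relevant monotonicity assertion with $\omega'(s_{0})=0$ shows that $\omega'$ is strictly signed on all of $(s_{0},M)$, precluding any further critical point to the right. Hence there is at most one. I expect the only delicate point to be the ``first-vanishing-time'' bookkeeping that converts the sign of $\omega''$ at a prospective critical point into an actual contradiction; the rest is a direct reading of the ODE, and in particular no structure of \eqref{eq:app-eqn-h} beyond the sign of the numerator at critical points is needed.
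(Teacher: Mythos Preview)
Your argument is correct and is exactly the approach the paper sketches: compute $\omega''(s_{0}) = \frac{(n-2)(\omega(s_{0})-1)}{2s_{0}^{2}}$ at any critical point, invoke ODE uniqueness for the case $\omega(s_{0})=1$, and then run the case analysis (which the paper leaves to the reader and you carry out via the first-vanishing-time contradiction). Your deduction of ``at most one critical point'' from the monotonicity statements is a clean way to organize the case check.
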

To prove this, one may observe that \eqref{eq:app-eqn-h} shows that if $s_{0}$ is a critical point of $\omega$, then
\begin{equation*}
\omega''(s_{0}) = \frac{(n-2)(\omega(s_{0}) -1)}{2s_{0}^{2}}.
\end{equation*}
This shows that at a critical point of $h$, $\omega''(s_{0})>0$ is equivalent to $\omega(s_{0})>1$ and $\omega''(s_{0})<0$ is equivalent to $\omega(s_{0})<1$ ($\omega(s_{0})=1$ implies that $\omega\equiv 1$ by ODE uniqueness), so one may consider various cases to check the asserted properties.

\begin{prop}[{\cite[Proposition 4]{Bryant:Solitons}}]\label{prop:app-max-extend}
If $\omega(s)$ is a solution of \eqref{eq:app-eqn-h} with $\omega(0) = 1$ and $\omega'(0) < 0$ and $\omega(s)$ is defined on a maximally extended interval $[0,M) \subset [0,\infty)$, then necessarily $M = \infty$. 
\end{prop}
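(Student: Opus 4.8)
The plan is to recognize that the maximal interval $[0,M)$ can only fail to be all of $[0,\infty)$ if, as $s\to M^{-}$ with $M<\infty$, either $\omega$ loses positivity (so the leading coefficient $4s^{2}\omega$ of \eqref{eq:app-eqn-h} degenerates) or $\omega,\omega'$ develop a singularity, and to rule out both. First I would rewrite \eqref{eq:app-eqn-h} in the normal form
\begin{equation*}
\omega'' = \frac{(\omega')^{2}}{2\omega} - \frac{s+2(n-2)}{4s\omega}\,\omega' + \frac{(n-2)(\omega-1)}{2s^{2}},
\end{equation*}
which is valid wherever $s>0$ and $\omega>0$; standard ODE continuation then guarantees that the solution extends past any $s=M$ at which $s$ is bounded away from $0$, $\omega$ is bounded and bounded away from $0$, and $\omega'$ is bounded. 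Since $\omega(0)=1$ and $\omega'(0)<0$, for small $s_{0}>0$ we have $\omega(s_{0})<1$ and $\omega'(s_{0})<0$, so the preceding Lemma applies and yields $\omega'<0$ on all of $(0,M)$. Hence $\omega$ is strictly decreasing with $0<\omega<1$, the monotone limit $L:=\lim_{s\to M^{-}}\omega(s)\in[0,1)$ exists, and it suffices to contradict $M<\infty$ in the two cases $L>0$ and $L=0$.

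When $L>0$, the only possible obstruction is a blow-up of $\omega'$, which I would exclude through the Riccati-type first term of the normal form. Because $0<\omega\le 1$, $\omega'<0$, and $s$ is bounded below near $M$, the middle term is nonnegative and the last term is bounded, so $\omega''\ge \tfrac12(\omega')^{2}-C$. Setting $v=-\omega'>0$ this reads $v'\le -\tfrac12 v^{2}+C$, whence $v$ can never exceed $\max\{v(M/2),\sqrt{2C}\}$; thus $\omega'$ stays bounded on $[M/2,M)$. With $\omega,\omega'$ bounded, $\omega\ge L>0$, and $s\ge M/2$, the normal form shows $\omega''$ is bounded as well, so $\omega$ and $\omega'$ have finite limits at $M$ and the solution continues past $M$, contradicting maximality.

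The genuinely delicate case, which I expect to be the main obstacle, is $L=0$: here the leading coefficient degenerates and no continuation argument is available. I plan to rule it out by a logarithmic barrier instead. Dropping the nonnegative first term, bounding the last term, and using $\tfrac{s+2(n-2)}{4s}\ge\tfrac14$ together with $\omega'<0$, the normal form gives
\begin{equation*}
\omega'' \ge -\frac14\,\frac{\omega'}{\omega} - C = -\frac14\,(\log\omega)' - C .
\end{equation*}
Integrating from a fixed $s_{0}$ near $M$ yields $\omega'(s)\ge \omega'(s_{0})-\tfrac14\log\omega(s)+\tfrac14\log\omega(s_{0})-C(s-s_{0})$, and since $\omega(s)\to 0^{+}$ forces $-\log\omega(s)\to+\infty$, we would obtain $\omega'(s)\to+\infty$, contradicting $\omega'<0$. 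Thus $L=0$ cannot occur either, so no finite $M$ is possible and $M=\infty$, as claimed.
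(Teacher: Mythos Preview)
Your proof is correct and follows essentially the same route as the paper's: both arguments hinge on (i) a Riccati-type lower bound $\omega''\ge \tfrac12(\omega')^{2}-C$ to show $\omega'$ cannot blow up, and (ii) the logarithmic barrier $\omega''\ge -\tfrac14(\log\omega)'-C$ to rule out $\omega\to 0$ at a finite endpoint. The only difference is organizational: the paper first uses (i) to conclude that $M<\infty$ forces $\lim_{s\to M^{-}}\omega=0$ and then applies (ii), whereas you split into the cases $L>0$ and $L=0$ and apply (i) and (ii) respectively; your explicit invocation of the preceding Lemma (giving $\omega'<0$) makes the final contradiction in the $L=0$ case slightly cleaner than the paper's wording.
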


\begin{proof}
We first claim that if $M < \infty$, then $\lim_{s\nearrow M} \omega(s) = 0$. To see this, note that \eqref{eq:app-eqn-h} implies 
\begin{equation*}
\omega''(s) \geq -\frac{n-2}{2s^{2}} + \frac 12 (\omega'(s))^{2} - \frac 14 \omega'(s) \left(1+\frac{2(n-2)}{s}\right).
\end{equation*}
From this, it is clear that there is some $C>0$ so that if $\omega'(s) \leq -C$ for some $s \geq 1$, then $\omega''(s) > 0$. This implies that $\omega'(s)$ must be uniformly bounded from below on $[0,M)$. Because we have assumed that $M<\infty$, it must be that $\lim_{s\nearrow M} \omega(s) = 0$, otherwise we could extend the solution $\omega(s)$ past $M$. 

Now, \eqref{eq:app-eqn-h} also implies that for $s >0$, then 
\begin{equation*}
\omega''(s) > - \frac{n-2}{2s^{2}} - \frac 14 \frac{\omega'(s)}{\omega(s)}.
\end{equation*}
Integrating from $s_{0}$ to $s < M$, this implies that 
\begin{equation*}
\omega'(s) -\omega'(s_{0}) > -\frac{n-2}{2}\left( \frac{1}{s_{0}} - \frac{1}{s}\right) + \frac 1 4 \log\left( \frac{\omega(s_{0})}{\omega(s)}\right).
\end{equation*}
Letting $s\nearrow M$, the left-hand side must tend to infinity, because $\lim_{s\nearrow M}\omega(s) = 0$, but the right-hand side is bounded above, a contradiction. 
\end{proof}

\begin{lemm}\label{lemm:app-o1-decay}
For $\omega(s)$ a solution of \eqref{eq:app-eqn-h} with $\omega(0) =1$ and $\omega'(0) < 0$, we have that $\omega'(s),\omega''(s) = o(1)$.
\end{lemm}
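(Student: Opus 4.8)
The plan is to first extract the qualitative behavior of $\omega$ from the results already established in the appendix, and then run a soft analysis argument on the (integrable) derivative $\omega'$.

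First I would record that $\omega$ is strictly decreasing and converges to a \emph{positive} limit. Since $\omega(0)=1$ and $\omega'(0)<0$, for small $s>0$ we have $\omega(s)<1$ and $\omega'(s)<0$; the monotonicity lemma \cite[Lemma~1]{Bryant:Solitons} then forces $\omega'<0$ on all of $(0,\infty)$ (recall that $M=\infty$ by Proposition~\ref{prop:app-max-extend}). Hence $\omega$ is decreasing and, being a positive solution, converges to some $L\ge 0$. To see that $L>0$, I would reuse the differential inequality
\[
\omega''(s) > -\frac{n-2}{2s^{2}} - \frac14\frac{\omega'(s)}{\omega(s)}
\]
from the proof of Proposition~\ref{prop:app-max-extend}: integrating from a fixed $s_{0}$ to $s$ produces a right-hand side containing the term $\frac14\log(\omega(s_{0})/\omega(s))$, which would tend to $+\infty$ if $\omega(s)\to 0$, contradicting $\omega'<0$. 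Thus $L>0$, and $\omega$ is pinched between $L$ and $1$; in particular it is bounded away from $0$. Since $\omega$ is monotone and bounded, $\int_{1}^{\infty}|\omega'|\,ds=\omega(1)-L<\infty$.

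Next I would bound $\omega'$. Dropping the manifestly nonnegative terms in the first inequality of the proof of Proposition~\ref{prop:app-max-extend} gives, for $s\ge 1$, the bound $\omega''\ge \frac12(\omega')^{2}-\frac{n-2}{2}$, so that $\omega''>0$ whenever $\omega'<-\sqrt{n-2}$. A routine ODE-comparison (barrier) argument then yields $\omega'(s)\ge \min\{\omega'(1),-\sqrt{n-2}\}$ for all $s\ge 1$, which together with $\omega'<0$ gives a uniform bound $|\omega'|\le C$. With $\omega\in[L,1]$, $L>0$, and $|\omega'|\le C$ in hand, I would substitute directly into \eqref{eq:app-eqn-h} to see that $\omega''$ is bounded on $[1,\infty)$; hence $\omega'$ is Lipschitz, and in particular uniformly continuous.

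The conclusion then follows from a soft lemma: a uniformly continuous function on $[1,\infty)$ whose absolute value is integrable must tend to $0$. Applied to $\omega'$, this gives $\omega'=o(1)$. For the second derivative I would revisit \eqref{eq:app-eqn-h} once more, writing
\[
\omega''=\frac{(n-2)(\omega-1)}{2s^{2}}+\frac{(\omega')^{2}}{2\omega}-\frac{\omega'}{4\omega}\Big(1+\frac{2(n-2)}{s}\Big),
\]
and letting $s\to\infty$: every term vanishes because $s\to\infty$, $\omega'\to 0$, and $\omega\to L>0$, so $\omega''=o(1)$ as well. The main obstacle I anticipate is the step ruling out $L=0$, since all the later estimates—the boundedness of $\omega''$ and the vanishing of the $1/\omega$ terms—rely on $\omega$ staying bounded away from $0$; the logarithmic integral argument is doing the essential work, while the boundedness of $\omega'$ and the integrability-plus-equicontinuity conclusion are routine once $L>0$ is secured.
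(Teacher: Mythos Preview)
Your argument is correct, but it takes a genuinely different route from the paper's.

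The paper proceeds by a direct moving-barrier argument on $\omega'$: rewriting \eqref{eq:app-eqn-h} as
\[
\omega''(s)=\frac{(n-2)(\omega(s)-1)}{2s^{2}}-\frac{\omega'(s)(s+2(n-2))}{4s\,\omega(s)}+\frac{(\omega'(s))^{2}}{2\omega(s)},
\]
one sees (using only $0<\omega\le 1$) that for each $\delta>0$ there is $s_{0}(\delta)$ with $\omega''(s)>0$ whenever $s\ge s_{0}$ and $\omega'(s)\le -\delta$. Since $\omega$ is positive and decreasing it converges, so $\omega'$ cannot stay below $-\delta$; once $\omega'(s_{1})>-\delta$ at some $s_{1}\ge s_{0}$, the barrier keeps $\omega'\ge -\delta$ thereafter. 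Letting $\delta\downarrow 0$ gives $\omega'=o(1)$, and then $\omega''=o(1)$ from the equation. Crucially, \emph{no positive lower bound on $\omega$ is needed}: the term $-\omega'/(4\omega)\ge \delta/4$ already dominates because $\omega\le 1$.

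Your approach instead first establishes $L:=\lim\omega>0$ via the logarithmic integral inequality, then bounds $\omega''$ and invokes the Barbalat-type lemma (uniformly continuous $+$ integrable $\Rightarrow$ vanishing) on $\omega'$. This works, and your proof of $L>0$ is logically independent of the Lemma---indeed it is a slight sharpening of the paper's own argument for Corollary~\ref{coro:app-all-time-monotone}, which as written \emph{invokes} the Lemma to bound $\omega'(s)-\omega'(s_{0})$, whereas you observe that $\omega'<0$ alone already bounds it by $-\omega'(s_{0})$. So you have effectively reversed the logical dependency between Lemma~\ref{lemm:app-o1-decay} and Corollary~\ref{coro:app-all-time-monotone}. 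The price is a longer argument with an extra real-analysis lemma; the paper's barrier approach is shorter and sidesteps entirely the ``main obstacle'' you flag, since it never divides by a lower bound for $\omega$.
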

\begin{proof}
Rewriting \eqref{eq:app-eqn-h} as 
\begin{equation*}
\omega''(s) = \frac{(n-2)(\omega(s)-1)}{2s^{2}} - \frac{\omega'(s)(s+2(n-2))}{4s \omega(s)} + \frac{(\omega'(s))^{2}}{2 \omega(s)},
\end{equation*}
we see that for a fixed $\delta>0$, there is $s_{0}=s_{0}(\delta)$ large enough so that if $\omega'(s) < -\delta$ for $s \geq s_{0}$, then $\omega''(s) > 0$. On the other hand, we must be able to find $s_{1} > s_{0}$ so that $\omega'(s_{1}) > - \delta$ (otherwise $\omega(s)$ could not converge). As such, for $s \geq s_{1}$, $\omega'(s) \geq -\delta$ (we have just shown that $-\delta$ is a barrier for $\omega'(s)$). This clearly shows that $\omega'(s) = o(1)$. Using this in \eqref{eq:app-eqn-h} gives $\omega''(s) = o(1)$. 
\end{proof}

\begin{coro}[{\cite[Corollary 2]{Bryant:Solitons}}]\label{coro:app-all-time-monotone}
A solution of \eqref{eq:app-eqn-h} with $\omega(0) =1$ and $\omega'(0) < 0$ exists for all $s \geq 0$ and is monotonically decreasing with a positive lower bound. 
\end{coro}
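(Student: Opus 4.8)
The plan is to assemble the three assertions---global existence, strict monotonicity, and a positive lower bound---from the results already in place, namely Proposition \ref{prop:app-max-extend}, Lemma \ref{lemm:app-o1-decay}, and the sign analysis of the preceding Lemma. Global existence requires no new work: Proposition \ref{prop:app-max-extend} shows that the maximal interval of existence $[0,M)$ must satisfy $M = \infty$, so $\omega$ is automatically defined for all $s \geq 0$.

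For monotonicity, I would first pass off the initial point. Since $\omega(0) = 1$ and $\omega'(0) < 0$, continuity gives $\delta > 0$ with $\omega'(s) < 0$ on $[0,\delta)$, so $\omega$ strictly decreases there and hence $\omega(s_0) < 1$ for any fixed $s_0 \in (0,\delta)$. As $\omega'(s_0) < 0$ as well, the final clause of the preceding Lemma applies and yields $\omega'(s) < 0$ for all $s \in (s_0,\infty)$. Combined with $\omega' < 0$ on $(0,s_0]$, this shows $\omega' < 0$ throughout $(0,\infty)$, i.e.\ $\omega$ is strictly decreasing.

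It then remains to produce the positive lower bound. Being decreasing and positive, $\omega$ has a limit $L := \lim_{s\to\infty}\omega(s) \geq 0$, and I would show $L > 0$ by contradiction. Supposing $L = 0$, I would reuse the differential inequality from the proof of Proposition \ref{prop:app-max-extend}, namely $\omega''(s) > -\frac{n-2}{2s^2} - \frac14 \frac{\omega'(s)}{\omega(s)}$, and integrate it from a fixed $s_0$ to $s$ to obtain
\begin{equation*}
\omega'(s) > \omega'(s_0) - \frac{n-2}{2}\left(\frac{1}{s_0} - \frac{1}{s}\right) + \frac14 \log\left(\frac{\omega(s_0)}{\omega(s)}\right).
\end{equation*}
As $s \to \infty$ the right-hand side diverges to $+\infty$, since $\omega(s)\to 0$ forces $\log(\omega(s_0)/\omega(s)) \to +\infty$ while the remaining terms stay bounded. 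This contradicts both $\omega'(s) < 0$ and the decay $\omega'(s) = o(1)$ supplied by Lemma \ref{lemm:app-o1-decay}, so $L > 0$.

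I do not expect a serious obstacle here, as the analytic heavy lifting is carried by the cited results; the only step needing care is the passage off the initial value $\omega(0) = 1$, because the sign Lemma requires the \emph{strict} inequality $\omega(s_0) < 1$. This is exactly why I would first move to a nearby interior point $s_0 > 0$, where strict decrease has already pulled $\omega$ below $1$, before invoking it.
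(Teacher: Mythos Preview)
Your proposal is correct and takes essentially the same approach as the paper: global existence from Proposition \ref{prop:app-max-extend}, monotonicity from the sign Lemma, and the positive lower bound by integrating the differential inequality $\omega'' > -\tfrac{n-2}{2s^{2}} - \tfrac14 \omega'/\omega$ to reach a contradiction with $\omega(s)\to 0$. The paper's written proof only spells out the last step, leaving the first two implicit; your explicit treatment of the passage off the initial point $\omega(0)=1$ before invoking the sign Lemma is a welcome clarification rather than a different argument.
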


\begin{proof}
As in the proof of Proposition \ref{prop:app-max-extend}, 
\begin{equation*}
\omega'(s) -\omega'(s_{0}) > -\frac{n-2}{2}\left( \frac{1}{s_{0}} - \frac{1}{s}\right) + \frac 1 4 \log\left( \frac{\omega(s_{0})}{\omega(s)}\right).
\end{equation*}
By the previous lemma, we have that $\omega'(s) - \omega'(s_{0}) \leq -\omega'(s_{0}) = o(1)$. Thus, it cannot happen that $\omega(s) \to 0$ as $s\to\infty$. 
\end{proof}

Now, we show that $\omega(s)$ and $f(s)$ agree with their formal asymptotic expansions up to second order; this will allow us to study the rate at which the Bryant solitons approach a cone.
\begin{prop}
For a solution of  \eqref{eq:app-eqn-h} with $\omega(0) =1$ and $\omega'(0) < 0$, by Corollary \ref{coro:app-all-time-monotone}, there is some $\alpha \in [0,1)$ so that $\lim_{s\to\infty } \omega(s) = 1-\alpha$. With this choice of $\alpha$, we have the asymptotic expansion of $\omega(s)$,
\begin{equation*}
\omega(s) = 1-\alpha + \frac{2(n-2)\alpha(1-\alpha)}{s} + \varphi(s),
\end{equation*}
where $\varphi(s)$ satisfies $\varphi(s) = O(s^{-2})$, $\varphi'(s) ,\varphi''(s) = O(s^{-3})$. Furthermore, we have that $f(s)$ satisfies the expansion (up to addition of a constant) 
\begin{equation*}
f(s) = \frac{s}{4(1-\alpha)} + \psi(s),
\end{equation*}
where $\psi(s) = O(s^{-1})$, $\psi'(s) = O(s^{-2})$, and $\psi''(s) = O(s^{-3})$. 
\end{prop}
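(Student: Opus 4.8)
The expansion for $\omega$ is the crux; once it is established, the expansion for $f$ follows by solving the soliton equations \eqref{eq:app-sol-eqns} algebraically for $f'$ and $f''$ and integrating once. Throughout I would write $\beta := 1-\alpha > 0$ (so $\omega \to \beta$ by Corollary \ref{coro:app-all-time-monotone}) and $c := 2(n-2)\alpha(1-\alpha)$; a formal substitution of $\omega = \beta + c/s + \cdots$ into \eqref{eq:app-eqn-h} shows that matching the constant terms forces precisely this value of $c$, so the task is to prove that the formal expansion is accurate to the claimed orders. The natural first move is to reduce the second-order equation \eqref{eq:app-eqn-h} to a first-order equation for $G := s^{2}\omega'$. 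Using $s^{2}\omega'' = G' - 2G/s$ and $s\omega' = G/s$, equation \eqref{eq:app-eqn-h} becomes
\[
4\omega\, G' = -G + 2(n-2)\omega(\omega-1) + \frac{(8\omega - 2(n-2))G}{s} + \frac{2G^{2}}{s^{2}}.
\]
The decisive structural feature is that the coefficient of $G$ on the right is $-1$ while $4\omega \to 4\beta > 0$: this is an exponentially stable linear equation forced by the bounded inhomogeneity $2(n-2)\omega(\omega-1) \to -c$, with the remaining terms small once $G$ is controlled.

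I would then argue in two steps. First (boundedness): since $\omega$ is monotonically decreasing, $G = s^{2}\omega' \leq 0$, so only a lower bound is needed. Choosing $K$ larger than $2\sup_{s}|2(n-2)\omega(\omega-1)|$ and $s_{*}$ large, the displayed ODE gives $G' > 0$ whenever $G \leq -K$ and $s \geq s_{*}$, because the stabilizing term $-G = |G|$ and the nonnegative quadratic $2G^{2}/s^{2}$ dominate the lower-order term $(8\omega-2(n-2))G/s = O(|G|/s)$. Hence $G$ cannot drop below $-K$, so $G = O(1)$, $\omega' = O(s^{-2})$, and integrating from $s$ to $\infty$ yields $u := \omega - \beta = O(s^{-1})$. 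Second (sharp rate): with $u = O(s^{-1})$ I write $2(n-2)\omega(\omega-1) = -c + E(s)$ with $E(s) = 2(n-2)[(1-2\alpha)u + u^{2}] = O(s^{-1})$ and set $H := G + c$, so that $4\omega H' = -H + E(s) + O(s^{-1})$. The standard integrating-factor estimate (the homogeneous part decays like $e^{-s/(4\beta)}$) then gives $H = O(s^{-1})$, i.e.\ $\omega' = -c/s^{2} + O(s^{-3})$. Therefore $\varphi' = \omega' + c/s^{2} = O(s^{-3})$ and, integrating, $\varphi = O(s^{-2})$; finally, solving \eqref{eq:app-eqn-h} for $\omega''$ and substituting the now-known asymptotics of $\omega$ and $\omega'$, the leading constants cancel and one finds $\omega'' = O(s^{-3})$, whence $\varphi'' = \omega'' - 2c/s^{3} = O(s^{-3})$.

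For $f$, I would solve the second equation of \eqref{eq:app-sol-eqns} for $f'$ and the first for $f''$, then substitute the $\omega$-expansion. In the expression for $f' - \tfrac{1}{4(1-\alpha)}$ the leading $O(1)$ contributions cancel exactly because $4c = 8(n-2)\alpha(1-\alpha)$ — this cancellation is precisely what pins down the coefficient $\tfrac{1}{4(1-\alpha)}$ — leaving $\psi' = f' - \tfrac{1}{4(1-\alpha)} = O(s^{-2})$; fixing the additive constant so that $\psi \to 0$ and integrating gives $\psi = O(s^{-1})$. Likewise, in $f''$ the leading $1$'s and the two $O(s^{-1})$ contributions cancel against each other, yielding $\psi'' = f'' = O(s^{-3})$.

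The genuine work lies entirely in the $\omega$ asymptotics, and specifically in the bootstrap: upgrading the soft bounds $\omega',\omega'' = o(1)$ of Lemma \ref{lemm:app-o1-decay} to sharp rates with the correct leading coefficient. The main obstacle is making the stability analysis of the first-order equation for $G$ rigorous — the barrier step that produces boundedness and the integrating-factor step that produces the $O(s^{-1})$ rate for $H$. Everything downstream ($\varphi$, $\varphi'$, $\varphi''$, and the full $f$ expansion) is then routine substitution, with the algebraic cancellations serving as consistency checks confirming the coefficients $c$ and $\tfrac{1}{4(1-\alpha)}$.
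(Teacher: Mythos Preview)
Your proof is correct and follows the same underlying mechanism as the paper's: both exploit the exponential stability of the linearization (the $-\tfrac14$ coefficient in $\omega'' + \tfrac14\omega' = \cdots$, equivalently your $-G$ term in the equation for $G=s^2\omega'$), and both bootstrap once to upgrade $\omega' = O(s^{-2})$ to the sharp expansion. The organizational difference is that the paper works directly with the second-order inequality $\omega'' + \tfrac14\omega' > -C/s^2$ and the integrating factor $e^{s/4}$ (using the known sign $\omega'\le 0$ for the other side), then repeats verbatim for $\varphi$; you instead pass to the first-order variable $G$, use a barrier for boundedness, and then apply the integrating factor to $H=G+c$. Your packaging arguably makes the cancellation of the leading term more transparent (it is built into the shift $H=G+c$), while the paper's version is slightly more bare-handed. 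The treatment of $f$ is identical in substance.
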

\begin{proof}
By \eqref{eq:app-eqn-h}, we have that
\begin{equation*}
 \omega''(s) + \frac 1 4 \omega'(s) > -  \frac{C}{s^{2}}.
\end{equation*}
We may use an integrating factor to rewrite this as
\begin{equation*}
\frac{d}{ds}\left( e^{s/4} \omega'(s) \right) \geq - \frac{C}{s^{2}} e^{s/4}.
\end{equation*}
Integrating from $1$ to $s$ thus yields 
\begin{equation*}
e^{s/4}\omega'(s) - \omega'(1) \geq - C \int_{1}^{s} \frac{e^{x/4}}{x^{2}} dx.
\end{equation*}
Now, because
\begin{equation*}\begin{split}
\int_{1}^{s} \frac{e^{(x-s)/4}}{x^{2}}dx & = \int_{1}^{s/2} \frac{e^{(x-s)/4}}{x^{2}}dx + \int_{s/2}^{s} \frac{e^{(x-s)/4}}{\tau^{2}}dx\\
 & \leq \int_{1}^{s/2} e^{(x-s)/4} dx +  \frac{4}{s^{2}} \int_{s/2}^{s} e^{(x-s)/4} dx\\
 & \leq 4 \left(e^{-s/8}-e^{(1-s)/4}\right) + \frac {16}{s^{2}} \left(1 - e^{-s/8}\right) = O(s^{-2}),
\end{split}\end{equation*}
we have that $\omega'(s) = O(s^{-2})$. This implies $\omega(s)-1+\alpha = O(s^{-1})$ and from \eqref{eq:app-eqn-h} it is not hard to see that also $\omega''(s) = O(s^{-2})$. 

We now define a function $\varphi(s)$ by 
\begin{equation*}
\omega(s) = 1-\alpha + \frac{2(n-2)\alpha(1-\alpha)}{s}+ \varphi(s).
\end{equation*}
Here, the choice of second order term comes from formally expanding $\omega(s)$ in a power series in $s^{-k}$ and solving for the $s^{-1}$ term (the power series does not converge, cf.\ \cite[Remark 11]{Bryant:Solitons}, so we are simply using the truncated expansion to cancel the highest order term in the ODE). By the above asymptotics of $\omega(s)$, we see that $\varphi(s) = O(s^{-1})$ and $\varphi'(s),\varphi''(s) = O(s^{-2})$. Using this, one may show (as above, except the ODE for $\varphi(s)$ decays one order faster in $s$, as we have just explained)
\begin{equation*}
\varphi''(s) + \frac 1 4 \varphi'(s) \geq - \frac{C}{s^{3}},
\end{equation*}
and then the same argument implies that $\varphi(s) = O(s^{-2})$ and $\varphi'(s),\varphi''(s) = O(s^{-3})$, as desired.

Now, by the bottom line of \eqref{eq:app-sol-eqns}, we see that 
\begin{equation*}\begin{split}
4 \left[ f(s) - \frac{s}{4(\alpha-1)}\right]' \omega(s) &  = \frac{1-\alpha-\omega(s)}{1-\alpha} + 2 \left( \frac{n-2}{s}(1-\omega(s)) - \omega' (s)\right)\\
& = - \frac{\varphi(s)}{1-\alpha} - \frac{4(n-2)^{2}\alpha(1-\alpha)}{s^{2}} \\
& \qquad - \frac{2(n-2)\varphi(s)}{s} + \frac{2(n-2)\alpha(1-\alpha)}{s^{2}} - \varphi'(s)\\
& =: 4  \psi'(s) \omega(s).
\end{split}
\end{equation*}
Here, we may choose $\psi(s)$ so that $\psi(s)\to 0$ as $s \to \infty$. In particular, we easily see that $\psi(s) = O(s^{-1})$, $\psi'(s) = O(s^{-2})$, $\psi''(s) = O(s^{-3})$, as desired. 
\end{proof}
It is clear from the proof that it is possible to show that $\omega(s)$ and $f(s)$ agree with their formal power series at infinity up to any finite number of terms. However, as remarked above, the power series does not converge.

\begin{prop}\label{prop:app-bry-are-asymp-cone}
Each of the solutions $\omega(s)$ of \eqref{eq:app-eqn-h} with $\omega(0)=0$ and $\omega'(0) < 0$ define a rotationally symmetric soliton with positive sectional curvature that is asymptotically conical as a soliton, in the sense of Definition \ref{defi:asymp-cone-sol}.

\end{prop}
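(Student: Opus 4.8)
The plan is to verify, one at a time, the three clauses of Definition \ref{defi:asymp-cone-sol} together with positivity of the sectional curvature; rotational symmetry is automatic since the metric is a warped product. Throughout I work in the variable $s = a^{2}$, in which (as recorded above) $g = \frac{ds^{2}}{4s\,\omega(s)} + s\,g_{S^{n-1}}$, and I use the two facts already established for these solutions: that $\omega$ is strictly decreasing with $\omega(0)=1$ and a positive limit $1-\alpha$ (Corollary \ref{coro:app-all-time-monotone}), and the expansions of $\omega$ and $f$ proved in the preceding proposition.

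\emph{Positive curvature.} From the warped-product formulas the radial sectional curvature is $-a''/a$ and the orbital one is $(1-(a')^{2})/a^{2}$. Since $a'(t) = \sqrt{\omega(s)}$, a one-line chain-rule computation gives $a''(t) = a\,\omega'(s)$, so the radial curvature equals $-\omega'(s)$ and the orbital curvature equals $(1-\omega(s))/s$. By Corollary \ref{coro:app-all-time-monotone}, $\omega'(s) < 0$ and $0 < \omega(s) < 1$ for $s > 0$, so both are positive.

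\emph{Construction of $F$ and clauses (1),(2).} I set $r := 2\sqrt{f}$, i.e.\ $f = r^{2}/4$. Because $f'(s) = \frac{1}{4(1-\alpha)} + \psi'(s) > 0$ for $s$ large, the equation $f(s) = r^{2}/4$ has a smooth increasing inverse $s = s(r)$ for $r$ large, and I let $F(r,\omega)$ be the point at radial level $s(r)$ with angular coordinate $\omega$. Since $F$ moves along the radial (hence gradient) direction, $\partial_{r}F$ is parallel to $X = \nabla f$; pairing with $df$ and using $f\circ F = r^{2}/4$ forces $\partial_{r}F = \sqrt{f}\,X/|X|^{2}$, so clause (2) is automatic. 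The metric is complete (the radial length $\int da/\sqrt{\omega}$ diverges because $\omega$ is bounded below by $1-\alpha>0$), so $f$ is proper, the complement of the image of $F$ is compact, and $F$ is a diffeomorphism onto its image; this gives clause (1).

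\emph{Conical asymptotics (clause (3)) and the main point.} In these coordinates $F^{*}g = \frac{(s'(r))^{2}}{4\,s(r)\,\omega(s(r))}\,dr^{2} + s(r)\,g_{S^{n-1}}$. Substituting $\omega(s) = (1-\alpha) + \frac{2(n-2)\alpha(1-\alpha)}{s} + \varphi(s)$ and $f(s) = \frac{s}{4(1-\alpha)} + \psi(s)$ yields $s(r) = (1-\alpha)r^{2} + O(r^{-2})$, hence $s'(r) = 2(1-\alpha)r + O(r^{-3})$ and a radial coefficient equal to $1 - \frac{2(n-2)\alpha}{(1-\alpha)r^{2}} + O(r^{-4})$. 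Thus $k := F^{*}g - g_{\alpha}$ satisfies $|k| = O(r^{-2})$, the radial part dominating the angular part (which is $O(r^{-4})$), so clause (3) holds with $\epsilon = 2/3$. The decisive bookkeeping is that $2/3 < 1/\sqrt{2}$, so these solitons genuinely satisfy the hypotheses of Theorem \ref{theo:main-unique-theo}. The remaining, and only nontrivial, work is the $C^{2}$ part: one differentiates the expressions above, using $\varphi',\varphi'',\psi',\psi'' = O(s^{-3})$ together with the bound $O(r^{-1})$ on the angular Christoffel symbols of the cone, to obtain $|\nabla k| = O(r^{-3})$ and $|\nabla^{2}k| = O(r^{-4})$, matching $O(r^{-3\epsilon - j})$ for $j=1,2$. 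I expect this propagation of the expansions and their two derivatives through the inversion $r\mapsto s(r)$ and through the covariant derivatives to be the main obstacle — routine but calculation-heavy — whereas the curvature sign and the construction of $F$ are immediate from the monotonicity already proved.
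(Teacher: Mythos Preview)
Your proof is correct and follows the same route as the paper's (very brief) argument, which simply records the sectional curvatures $-\omega'(s)$ and $(1-\omega(s))/s$ for positivity and then asserts that the asymptotically conical structure ``follows readily'' from the expansions of $\omega$ and $f$ in the preceding proposition. You supply considerably more detail than the paper does; one small slip is that the preceding proposition gives $\psi'(s)=O(s^{-2})$ rather than $O(s^{-3})$, but this does not affect your estimates, and the remark about $\epsilon=2/3<1/\sqrt{2}$ is a useful extra observation beyond what the proposition itself requires.
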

\begin{proof}
We have just shown that any solution of \eqref{eq:app-eqn-h} with $\omega(0)=0$ and $\omega'(0) < 0$ exists for all time and is monotonically decreasing with a positive lower bound. Fix $\alpha \in [0,1)$ so that $\lim_{s\to\infty}\omega(s) = 1-\alpha$. As the radial sectional curvature is $-\omega'(a^{2})$ and the sectional curvatures tangent to the orbits of rotations are $\frac{1-\omega(a^{2})}{a^{2}}$, these solutions have positive sectional curvature. That the soliton is asymptotically conical as a soliton follows readily from the asymptotics of $f(s)$ and $\omega(s)$ in the previous proposition.
%
%
%
\end{proof}

\bibliography{biblio} 

\newcommand{\etalchar}[1]{$^{#1}$}
\providecommand{\bysame}{\leavevmode\hbox to3em{\hrulefill}\thinspace}
\providecommand{\MR}{\relax\ifhmode\unskip\space\fi MR }
\providecommand{\MRhref}[2]{%
  \href{http://www.ams.org/mathscinet-getitem?mr=#1}{#2}
}
\providecommand{\href}[2]{#2}
\begin{thebibliography}{CCC{\etalchar{+}}11}

\bibitem[Ban87]{Bando:AnalyticRF}
Shigetoshi Bando, \emph{Real analyticity of solutions of {H}amilton's
  equation}, Math. Z. \textbf{195} (1987), no.~1, 93--97. \MR{888130
  (88i:53073)}

\bibitem[Bes08]{besse}
Arthur~L. Besse, \emph{Einstein manifolds}, Classics in Mathematics,
  Springer-Verlag, Berlin, 2008, Reprint of the 1987 edition. \MR{2371700
  (2008k:53084)}

\bibitem[Bre12a]{Brendle:HighDimSoliton}
S.~Brendle, \emph{Rotational symmetry of {R}icci solitons in higher
  dimensions}, to appear in J. Differential Geom., available at
  \url{http://arxiv.org/abs/1203.0270} (2012).

\bibitem[Bre12b]{Brendle:3DSolitonUniqueness}
\bysame, \emph{Rotational symmetry of self-similar solutions to the {R}icci
  flow}, to appear in Invent. Math., \url{http://arxiv.org/abs/1202.1264}
  (2012).

\bibitem[Bry]{Bryant:Solitons}
R.L. Bryant, \emph{Ricci flow solitons in dimension three with
  {SO}(3)-symmetries}, available at
  \url{http://www.math.duke.edu/~bryant/3DRotSymRicciSolitons.pdf}.

\bibitem[Cao97]{Cao:KRFsolitons}
Huai-Dong Cao, \emph{Limits of solutions to the {K}{\"a}hler-{R}icci flow}, J.
  Differential Geom. \textbf{45} (1997), no.~2, 257--272. \MR{1449972
  (99g:53042)}

\bibitem[Cao10]{Cao:RecentSoliton}
\bysame, \emph{Recent progress on {R}icci solitons}, Recent advances in
  geometric analysis, Adv. Lect. Math. (ALM), vol.~11, Int. Press, Somerville,
  MA, 2010, pp.~1--38. \MR{2648937 (2011d:53061)}

\bibitem[CCC{\etalchar{+}}11]{CaoCatinoChenMantegazzaMazzieri:BachFlat}
Huai-Dong Cao, Giovanni Catino, Qiang Chen, Carlo Mantegazza, and Lorenzo
  Mazzieri, \emph{Bach-flat gradient steady {R}icci solitons}, preprint,
  \url{http://arxiv.org/abs/1107.4591} (2011).

\bibitem[CD11]{ChenDerulle:StructInftyExpandSol}
Chih-Wei Chen and Alix Deruelle, \emph{Structure at infinity of expanding
  gradient ricci soliton}, preprint, \url{http://arxiv.org/abs/1108.1468}
  (2011).

\bibitem[Che12]{Chen:AsymptoticExpanding}
Chih-Wei Chen, \emph{On the asymptotic behavior of expanding gradient {Ricci}
  solitons}, Annals of Global Analysis and Geometry (2012), pp. 1--11.

\bibitem[CZ00]{ChenZhu:pinchedCurvature}
Bing-Long Chen and Xi-Ping Zhu, \emph{Complete {R}iemannian manifolds with
  pointwise pinched curvature}, Invent. Math. \textbf{140} (2000), no.~2,
  423--452. \MR{1757002 (2001g:53118)}

\bibitem[FIK03]{FeldmanIlmanenKnopf}
Mikhail Feldman, Tom Ilmanen, and Dan Knopf, \emph{Rotationally symmetric
  shrinking and expanding gradient {K}{\"a}hler-{R}icci solitons}, J.
  Differential Geom. \textbf{65} (2003), no.~2, 169--209. \MR{2058261
  (2005e:53102)}

\bibitem[GK04]{GastelKronz}
Andreas Gastel and Manfred Kronz, \emph{A family of expanding {R}icci
  solitons}, Variational problems in {R}iemannian geometry, Progr. Nonlinear
  Differential Equations Appl., vol.~59, Birkh{\"a}user, Basel, 2004,
  pp.~81--93. \MR{2076268 (2005m:53118)}

\bibitem[Ham82]{Hamilton:3mflds}
Richard~S. Hamilton, \emph{Three-manifolds with positive {R}icci curvature}, J.
  Differential Geom. \textbf{17} (1982), no.~2, 255--306. \MR{664497
  (84a:53050)}

\bibitem[Ham93]{Hamilton:Harnack}
\bysame, \emph{The {H}arnack estimate for the {R}icci flow}, J. Differential
  Geom. \textbf{37} (1993), no.~1, 225--243. \MR{1198607 (93k:58052)}

\bibitem[Ham95]{Hamitlon:Singularities95}
\bysame, \emph{The formation of singularities in the {R}icci flow}, Surveys in
  differential geometry, {V}ol.\ {II} ({C}ambridge, {MA}, 1993), Int. Press,
  Cambridge, MA, 1995, pp.~7--136. \MR{1375255 (97e:53075)}

\bibitem[KW13]{KotschwarWang:shrinkers}
Brett Kotschwar and Lu~Wang, \emph{Rigidity of asymptotically conical shrinking
  gradient ricci solitons}, preprint, \url{http://arxiv.org/abs/1307.4746}
  (2013).

\bibitem[Per02]{Perelman:Entropy}
Grisha Perelman, \emph{The entropy formula for the ricci flow and its geometric
  applications}, available at \url{http://arxiv.org/abs/math/0211159} (2002).

\bibitem[Pet06]{Petersen:RiemGeo}
Peter Petersen, \emph{Riemannian geometry}, second ed., Graduate Texts in
  Mathematics, vol. 171, Springer, New York, 2006. \MR{2243772 (2007a:53001)}

\bibitem[PRS11]{PigolaRimoldiSetti:remarksSol}
Stefano Pigola, Michele Rimoldi, and Alberto~G. Setti, \emph{Remarks on
  non-compact gradient {R}icci solitons}, Math. Z. \textbf{268} (2011),
  no.~3-4, 777--790. \MR{2818729 (2012i:53042)}

\bibitem[SS10]{SchulzeSimon:Expand-Cones}
Felix Schulze and Miles Simon, \emph{Expanding solitons with non-negative
  curvature operator coming out of cones}, preprint,
  \url{http://arxiv.org/abs/1008.1408} (2010).

\bibitem[Top06]{topping:RF}
Peter Topping, \emph{Lectures on the {R}icci flow}, London Mathematical Society
  Lecture Note Series, vol. 325, Cambridge University Press, Cambridge, 2006.
  \MR{2265040 (2007h:53105)}

\end{thebibliography}
\bibliographystyle{amsalpha}
\end{document}